\numberwithin{equation}{section}
\newcommand{\be}{\begin{equation}}
\newcommand{\ee}{\end{equation}}
\newcommand{\ba}{\begin{array}}
\newcommand{\ea}{\end{array}}
\newcommand{\bea}{\begin{eqnarray*}}
\newcommand{\eea}{\end{eqnarray*}}
\newcommand{\bean}{\begin{eqnarray}}
\newcommand{\eean}{\end{eqnarray}}
\newtheorem{theorem}{Theorem}[section]
\newtheorem{lemma}{Lemma}[section]
\newtheorem{remark}{Remark}[section]
\newtheorem{proposition}{Proposition}[section]
\newtheorem{corollary}{Corollary}[section]
\newcommand{\lc}{\mathrel{\raise2pt\hbox{${\mathop<\limits_{\raise1pt\hbox{\mbox{$\sim$}}}}$}}}
\newcommand{\gc}{\mathrel{\raise2pt\hbox{${\mathop>\limits_{\raise1pt\hbox{\mbox{$\sim$}}}}$}}}
\newcommand{\ec}{\mathrel{\raise1pt\hbox{${\mathop=\limits_{\raise2pt\hbox{\mbox{$\sim$}}}}$}}}
\newcommand{\nn}{\nonumber}
\begin{document}

\title{Higher-Order Energy-Decreasing Exponential Time Differencing Runge--Kutta methods for Gradient Flows}

\author[1]{Zhaohui Fu}
\author[2]{Jie Shen}
\author[3,4]{Jiang Yang}

\affil[1]{\small Department of Mathematics, National University of Singapore, Singapore ({fuzhmath@gmail.com})}
\affil[2]{\small School of Mathematical Science, Eastern Institute of Technology, Ningbo, Zhejiang, China {jshen@eitech.edu.cn})}
\affil[3]{\small Department of Mathematics, SUSTech International Center for Mathematics \& National Center for Applied Mathematics Shenzhen (NCAMS), Southern University of Science and Technology, Shenzhen, China  ({yangj7@sustech.edu.cn})}

\maketitle

\begin{abstract}
In this paper, we develop a general framework for constructing higher-order, unconditionally energy-stable exponential time differencing Runge–Kutta (ETDRK) methods applicable to a range of gradient flows. Specifically, we identify conditions sufficient for ETDRK schemes to maintain the original energy dissipation. Our analysis reveals that the widely employed third- and fourth-order ETDRK schemes fail to meet these conditions. To address this, we introduce new third-order ETDRK schemes, designed with appropriate stabilization, which satisfy these conditions and thus guarantee the unconditional energy decay property. We conduct extensive numerical experiments with these new schemes to verify their accuracy, stability, behavior under large time steps, long-term evolution, and adaptive time stepping strategy across various gradient flows. This study is the first to examine the unconditional energy stability of high-order ETDRK methods, and we are optimistic that our framework will enable the development of ETDRK schemes beyond the third order that are unconditionally energy stable.

\vskip .35cm
\textbf{Keywords:} exponential time differencing Runge--Kutta method; energy stability; gradient flows; phase-field models

\textbf{AMS subject classification (2020):} 65M12; 35K20; 35K35; 35K55

\end{abstract}


\section{Introduction}

We consider a  class of gradient flows  written in  the following general form:
\begin{equation}\label{gf} 
    u_t = G( \mathcal{L} u+f(u)), \quad (x,t)\in \Omega\times [0,T],
\end{equation}
 where $\Omega$ is a bounded domain in $R^d$ ($d=1,2,3$), $T$ is a finite time, $G$ is a nonpositive operator, and $\mathcal{L} $ is a positive definite operator. 
The above system is energy dissipative with the corresponding energy functional
\begin{equation}
    E(u)=\int_\Omega\left( \frac{1 }{2} |\mathcal{L}_{1/2} u |^2 +  F(u) \right)\,dx,
\end{equation}
where 
$F$ is such that $F'=f$. This general form covers a wide range of  gradient flows, such as the Allen--Cahn equation, the Cahn--Hilliard equation, the thin film model without slope selection also known as the molecular beam epitaxy (MBE) model, the phase-field crystal models, etc.  

Since these gradient flows involve strong nonlinearities and often  high-order derivatives, it is difficult to design an efficient time discretization scheme which is able to accurately approximate their dynamics and steady states. In particular, it is challenging for numerical schemes to guarantee the energy decay which is intrinsic to all of these models.  Ample numerical evidence indicates that non-physical oscillations may occur when the  energy stability is violated. 
Furthermore, the establishment of unconditional energy stability enables the creation of more efficient algorithmic designs, including the development of adaptive time stepping strategies, where the selection of time steps is solely governed by considerations of accuracy.


There have been a large number of studies concerning energy-stable schemes for gradient flows, see, e.g. \cite{gradsys,mixedvar,yang2015SDC,robust,ATA,IMEXAC,ACCH,IMEXBDF2varsteps,gSAVexpAC} and the references therein. In particular, the convex splitting technique \cite{convexsplitphasefield,convMBE} is useful to preserve unconditional energy stability, but it leads to a nonlinear scheme and is not easy to extend to more general cases. The invariant energy quadratization (IEQ) method (see, e.g., \cite{IEQ1,IEQ2}) and the  scalar auxiliary variable (SAV) method \cite{SAV1,SAV2} lead to linear schemes that can preserve different structures for a wide class of gradient flows. In particular, there have been a large number of works based on SAV methods, including SAV-BDF schemes \cite{SAVBDF,BDFlm} and SAV-RK schemes \cite{SAVRK,SAVRK2}. However, the energy stability achieved by the IEQ or SAV methods is based on a modified energy, not the original energy. 
We are not aware of any linear and higher than second-order for \eqref{gf} which is unconditional energy stable with respect to the original energy. 

On the other hand, exponential time differencing Runge--Kutta (ETDRK) schemes offer many advantages over multistep methods.
In this paper, we consider exponential time differencing Runge--Kutta (ETDRK) methods for \eqref{gf}. The key idea of ETDRK schemes is to apply the Duhamel's principle
\begin{equation}
    u(x,t)=e^{G\mathcal{L} (t-t_0)} u(x,t_0) - e^{G\mathcal{L} (t-t_0)} \int_{t_0}^{t} e^{-G\mathcal{L} (s-t_0)} G f(u(x,s)) ds,
\end{equation}
and approximate the implicit integral which contains the unknown $f(u(x,s))$ with an explicit RK method. The concept of exponential integrators has a long-standing history, tracing its origins back to the 1960s. There has been much literature related to using such methods to solve stiff problems, semilinear parabolic problems or gradient flows \cite{ETD1,ETDanal,ETDRK2,ETDRK2005a,ETDstiff}. However, ETDRK schemes are usually not energy stable. It has been shown in \cite{ETD1,MBPsemipara} that some first-order ETDRK schemes can be energy stable. More recently, it is shown in  \cite{ETDRK2} that a second-order ETDRK scheme, with proper stabilization,  is  unconditionally energy stable. However, no ETDRK scheme higher than second-order has proven to be energy stable.

This paper aims to introduce a comprehensive framework for developing unconditionally energy-stable ETDRK schemes tailored for various gradient flows. We have formulated a series of criteria for ensuring the unconditional energy stability of ETDRK schemes of arbitrary order. Our investigations reveal that the widely employed third-order and fourth-order ETDRK methods fall short of meeting these criteria. In response, we have designed innovative third-order ETDRK schemes that adhere to these standards, thereby achieving unconditional energy stability. To our knowledge, this work pioneers the establishment of sufficient conditions for high-order ETDRK methods to attain unconditional energy stability. We are optimistic that this foundational framework will pave the way for the creation of advanced ETDRK schemes surpassing the third order in terms of unconditional energy stability.

This paper is organized as follows. In Section \ref{sec2}, we present some background knowledge, and then state and prove our main theorem. In Section \ref{sec3}, we apply the main theorem to various known ETDRK schemes and determine whether they are unconditionally energy-stable, in particular, we show that the commonly used third-order and fourth-order ETDRK schemes do not satisfy these conditions, and we construct new third-order ETDRK schemes that satisfy these conditions and thus are unconditional energy stable. In Section \ref{sec4}, we  present ample numerical experiments to validate our new third-order schemes, and study how large time steps affect the solutions, how different time steps and stabilizers influence the energy evolution. We also present an example with adaptive time stepping to show the application of this scheme. Some concluding remarks are given in the final section.

\section{Energy decreasing ETDRK methods for gradient flows}\label{sec2}
In this section we first introduce some basic knowledge of gradient flows, phase field models, convex splitting and the ETDRK methods. Then we prove our main theorem, i.e. arbitrary ETDRK schemes are unconditionally energy stable as long as the conditions are satisfied. 

\subsection{Phase-field models}
We consider the general form of gradient flows in a Hilbert space $\mathcal{H}$:
\begin{equation}\label{gradientflows}
    u_t = G( \mathcal{L}  u+f(u)), \quad (x,t)\in \Omega\times [0,T],
\end{equation}
where $G$ is a nonpositive operator, $\mathcal{L} $ is a positive definite operator, and $f(u)=F'(u)$, where $F(u)$ is a nonlinear functional. In the following text, we denote the inner product by $(\cdot,\cdot)$. Taking the inner product of \eqref{gradientflows} with $ \mathcal{L}  u+f(u)$, we find the energy dissipation law:
\begin{equation}
	\frac{d}{dt} E(u)=(G( \mathcal{L}  u+f(u)),  \mathcal{L}  u+f(u))\le 0,
\end{equation}
where the free energy $E(u)$ is given by
\begin{equation}\label{EAC}
	E(u)=\int_\Omega \left(\frac{1}{2} (\mathcal{L}u,u) +F(u)\right) dx.
\end{equation}
 
Some typical examples are provided below:
\begin{itemize}
    \item the Allen--Cahn equation, $G=-I, \mathcal{L} =-\epsilon^2\Delta, f(u)=u^3-u$ and $ F(u)=\frac{1}{4} (u^2-1)^2$;
    \item the Cahn--Hilliard equation, $G=\Delta, \mathcal{L} =-\epsilon^2\Delta, f(u)=u^3-u$ and $ F(u)=\frac{1}{4} (u^2-1)^2$;
    \item the MBE model without slope selection, $G=-I, \mathcal{L} =-\epsilon^2\Delta^2$ but $f(u)$ should be replaced by $f(\nabla u)=-\nabla \cdot(\frac{\nabla u}{1+|\nabla u|^2})$, and correspondingly $ F(\nabla u)=-\frac{1}{2} \ln{( |\nabla u|^2+1)}$;
    \item the phase field crystal equation, $G=\Delta, \mathcal{L}=(\Delta+1)^2, f(u)=u^3-\epsilon u$ and $ F(u)=\frac{1}{4} (u^2-\epsilon)^2$.
\end{itemize}

We shall assume that $f$ is a Lipschitz continuous function with the Lipschitz constant $C_L$, i.e. we have \begin{equation}\label{lip}
	\|f(u)-f(v)\|\leq C_L \|u-v\|,\; \forall\; u,\;v \in \mathcal{H}. 
	\end{equation}
For the MBE model, the functions $F$ and $f$ depend only on $\nabla u$, which is different from other examples. The Lipschitz condition is satisfied in the sense
\begin{equation}\label{LipMBE}
	\| \partial^2_{\nabla u} F(\nabla u) \|_2 \leq 1,
\end{equation}
which also bounds the growth of the nonlinear term.

Notice that for the Cahn--Hilliard equation and the phase field crystal model, $f(u)$  does not naturally satisfy the Lipschitz assumption on nonlinearity, but we can truncate $f(u)$ to quadratic growth as in \cite{ACCH,ETDRK2} so as to satisfy the Lipschitz assumption. In practice such a modification never affects anything because of the boundedness of their solutions, see \cite{stabsizeforsemiFrCH,convganalfornonlocalCH}, which offer solid analysis without Lipschitz assumptions.

Consider the natural splitting of the energy $E(u)=E_l-E_n$ with
\begin{equation}
    \begin{aligned}
    & E_l(u)=\int_\Omega\left(  \frac{1 }{2}  |\mathcal{L} _{1/2} u |^2 + \frac{\beta}{2} |u|^2 \right)\ dx, \\
    & E_n(u)=\int_\Omega\left(  -F(u) + \frac{\beta}{2} |u|^2  \right)\ dx.
    \end{aligned}
\end{equation}
From this perspective, the gradient flow can thus be written as
\begin{equation}\label{sgf} 
    u_t=G(Lu-g(u)),
\end{equation}
where $L=\beta I+\mathcal{L} $  and $g=\beta I-f$ correspond to $E_l$ and $E_n$, respectively. In the computation, $Lu$ is treated implicitly and $g(u)$ is treated explicitly, which leads to linearly implicit schemes. 

Here $\beta$ serves as a stabilization to enhance the dissipation of linear part, so as to bound the Lipschitz growing nonlinear term in the analysis. In fact, such modification or stabilization is necessary for the proof of energy decay, which is pointed out in the remark attached to the main theorem. However, the stabilization may cause significant time delay phenomena if a low-order scheme is used or a large time step is taken. We will study the effect in detail for higher-order schemes in Section \ref{sec4}. On the other hand, the stabilization is necessary to guarantee the maximum bound property (MBP) for Allen--Cahn equations \cite{ETD1}. Besides, \cite{SAV2} provides numerical evidence to illustrate that the stabilization can significantly improve the numerical performance.

\subsection{ETDRK methods}
The key idea of ETDRK is to consider Duhamel's principle for the equation \eqref{sgf}
\begin{equation}
    u(x,t)=e^{GL(t-t_0)} u(x,t_0) - e^{GL(t-t_0)} \int_{t_0}^{t} e^{-GL(s-t_0)} G g(u(x,s)) ds,
\end{equation}
and approximate the implicit integral with a suitable quadrature formula. For example, assuming that we have $u_n$, the approximate solution at time step $n$, the simplest way to determine $u_{n+1}$  is to substitute the function $g(u)$ by a constant $g(u_n)$ which leads to the first-order ETD (ETD1) scheme:
\begin{equation}
    u_{n+1}=e^{\tau GL} u_n + \left(I-e^{\tau GL}\right)L^{-1}g(u_n),
\end{equation}
where $\tau$ is the time step.
For this scheme, the energy dissipation law and MBP for the Allen--Cahn equation have been proved, see, e.g., \cite{MBPsemipara}. 
The classical second-order ETDRK (ETDRK2) for the gradient flow (\ref{gf}) reads
\begin{eqnarray}
&& v=e^{\tau GL} u_n + \left(I-e^{\tau GL}\right)L^{-1}g(u_n),\\
&& u_{n+1}=v-\frac{1}{\tau} \left(e^{\tau GL}-I-\tau GL\right)(GL)^{-2}(Gg(v)-Gg(u_n)).
\end{eqnarray}
It is shown in \cite{ETDRK2} that the above scheme is energy decreasing with a suitably large stabilization constant $\beta$ for the Allen-Cahn and Cahn-Hilliard equations.

In general, the ETDRK schemes take the following form \cite{ETDstiff}:
\begin{equation}\label{ETDRKoriginform}
    \begin{aligned}
         &v_1=u_n,\\
         &v_i=\chi_i (\tau GL) u_n - \tau \sum_{j=1}^{i-1} a_{ij}(\tau GL) Gg(v_j),\quad i=2,...,s ,\\
         &u_{n+1}=\chi (\tau GL) u_n - \sum_{j=1}^s b_j(\tau GL) Gg(v_j),
    \end{aligned}
\end{equation}
where $\chi(z)=e^z,$ $\chi_i(z)=\chi(c_i z)$, and the coefficients $a_{ij},b_j$ are constructed to equal to or approximate exponential functions. For simplicity we define a class of functions which will be frequently used
\begin{equation}
  \phi_0 (z) = e^z,\quad \phi_{k+1}(z) = \frac{\phi_k(z)-\phi_k(0)}{z},\; \text{ with }\; \phi_k(0)=1/k!.
\end{equation}
The ETDRK schemes \eqref{ETDRKoriginform} could also be written in Butcher Tableau, although now its coefficients are functions
\begin{equation}\label{Tableau}
\begin{array}{c|cccc|c}
    c_1 &        &     &  &   &\chi_1(\tau GL)\\
    c_2 & a_{21} &     &  &   &\chi_2(\tau GL)\\
    ... & ...    & ... &  &   &...\\
    c_s & a_{s1} & ... & a_{s,s-1}&  &\chi_s(\tau GL)\\
    \hline
        & b_1 & b_2 & ... & b_s & \chi(\tau GL)
\end{array} 
\end{equation}
In order to preserve the equilibria, the coefficients of the method have to satisfy 
\begin{equation}\label{equilcond}
    \sum_{j=1}^s b_j(z)=\frac{\chi(z)-1}{z}, \quad \sum_{j=1}^s a_{ij}(z)=\frac{\chi_i(z)-1}{z}.
\end{equation}
Taking the ETDRK2 as an example, $a_{21}(z)=\phi_1(z)=(e^z -1)/z, b_2(z)=\phi_2(z)=(e^z-z-1)/z^2$ and $b_1=\phi_1-\phi_2$, where $z=\tau GL$ and its Butcher Tableau reads
\begin{equation}
	\begin{array}{c|cc|c}
		0   &        &        & 1\\
		1   & \phi_1(\tau GL) &        &\chi(\tau GL)\\
		\hline
		& \phi_1(\tau GL)-\phi_2(\tau GL) & \phi_2(\tau GL)  & \chi(\tau GL)
	\end{array} 
\end{equation}

With the help of (\ref{equilcond}) and setting $u_{n+1}=v_{s+1}$, we can rewrite the solution (\ref{ETDRKoriginform}) as
\begin{equation}\label{ETDRKrewritten}
    \begin{aligned}
        & v_i = u_n +\tau \sum_{j=1}^{i-1} a_{ij} (\tau GL) (G Lu_n-G g(v_j)),\quad i=2,...,s,\\
        & v_{s+1}= u_n +\tau \sum_{j=1}^s b_j(\tau GL) (G Lu_n-G g(v_j)).
    \end{aligned}
\end{equation}

\subsection{A general framework for energy stable ETDRK schemes}
We present below a general framework for constructing energy-stable ETDRK schemes.
We start with a useful lemma whose proof is straightforward.

\begin{lemma}
Consider a positive-definite operator $L=\beta I+\mathcal{L} $, and let $f$ be an analytic function whose domain includes the spectrum of $L$, i.e., the values
$\{f(\lambda_i)\}_{ i\in \mathcal{N}}$
exist, where $\{\lambda_i\}_{i\in\mathcal{N}}$ are the eigenvalues of $L$. Then, the eigenvalues of $f(L)$ are $\{f(\lambda_i)\}_{i\in\mathcal{N}}$. 
Furthermore, if $f$ is a positive function, then $f(L)$ is also a positive-definite operator.
\end{lemma}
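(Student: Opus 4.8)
The plan is to reduce everything to the spectral theory of the self-adjoint operator $L$. First I would note that $\mathcal{L}$ is symmetric and sectorial, so $L=\beta I+\mathcal{L}$ is self-adjoint; in the setting of these phase-field models (on a bounded domain with the usual boundary conditions) $L$ has a complete orthonormal system of eigenfunctions $\{e_i\}_{i\in\mathcal{N}}$ with real eigenvalues $\{\lambda_i\}_{i\in\mathcal{N}}$, and moreover $\lambda_i\ge\beta>0$ provided $\mathcal{L}$ is nonnegative (which holds for all the examples listed: $-\epsilon^2\Delta$, $-\epsilon^2\Delta^2$, $(\Delta+1)^2$), so $L$ is positive-definite. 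This gives the spectral decomposition $L=\sum_i \lambda_i\,(\,\cdot\,,e_i)\,e_i$, and for any analytic $f$ defined on a neighbourhood of $\sigma(L)$ one defines $f(L)=\sum_i f(\lambda_i)\,(\,\cdot\,,e_i)\,e_i$ by the functional calculus (consistent with the power-series definition wherever the series converges).

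Next I would carry out the two assertions. For the eigenvalue claim: applying $f(L)$ to an eigenfunction $e_j$ gives $f(L)e_j=\sum_i f(\lambda_i)(e_j,e_i)e_i=f(\lambda_j)e_j$, so each $f(\lambda_j)$ is an eigenvalue of $f(L)$ with eigenfunction $e_j$; conversely, since $\{e_i\}$ is complete, any eigenvalue of $f(L)$ must be one of the $f(\lambda_i)$. For the positive-definiteness claim: if $f>0$ on $\sigma(L)$, then for any $u=\sum_i \hat u_i e_i\neq 0$ we have $(f(L)u,u)=\sum_i f(\lambda_i)|\hat u_i|^2>0$, and $f(L)$ is self-adjoint because $f$ is real on the (real) spectrum; hence $f(L)$ is positive-definite. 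That is all the lemma requires.

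The only genuinely delicate point is justifying the functional calculus in infinite dimensions: one must know that $L$ is self-adjoint with compact resolvent (or at least has a purely discrete spectrum bounded below) so that the eigenfunction expansion converges and $f(L)$ is well-defined as an operator on the appropriate domain. For the concrete operators appearing here this is classical elliptic theory, so I would simply invoke it rather than prove it. A secondary point is to make sure $f$ is defined on \emph{all} of $\sigma(L)$, including its behaviour as $\lambda_i\to\infty$; in the applications $f$ will be one of the $\phi_k$ composed with $z\mapsto \tau G L$-type arguments, and one checks these are entire (or at least analytic on the relevant half-line), so no issue arises. Since the authors explicitly say the proof is ``straightforward,'' I would keep the write-up to the spectral-decomposition argument above and relegate the self-adjointness/discreteness of $L$ to a one-line remark citing standard references.
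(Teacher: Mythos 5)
Your spectral-decomposition argument is correct and is exactly the standard functional-calculus reasoning the paper has in mind: the authors state the lemma with the remark that its proof is ``straightforward'' and give no written proof, so there is nothing in the paper for your write-up to diverge from. One small simplification: the lemma already \emph{assumes} $L=\beta I+\mathcal{L}$ is positive-definite, so your discussion of $\mathcal{L}\ge 0$ for the model examples is unnecessary (and note the paper's MBE example lists $\mathcal{L}=-\epsilon^2\Delta^2$, so that aside would need care anyway); the rest — discreteness/self-adjointness of $L$, $f(L)e_j=f(\lambda_j)e_j$, and $(f(L)u,u)=\sum_i f(\lambda_i)|\hat u_i|^2>0$ — is all that is needed.
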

   Hereafter,  we say a matrix $\Delta$ is positive-definite if the eigenvalues of its symmetrizer $(\Delta+\Delta^T)/2$ are all positive.
\begin{theorem}\label{mainth}
Consider the gradient flow 
\begin{equation}
    u_t = G( \mathcal{L}  u+f(u)), \quad (x,t)\in \Omega\times [0,T]
\end{equation}
where $G$ is a negative operator, $\mathcal{L} $ is a sectorial operator and $f$ is a Lipschitz continuous function with the Lipschitz constant $C_L$. The ETDRK schemes \eqref{ETDRKoriginform} with the stabilizer $\beta\geq C_L$ unconditionally decreases the energy as long as the following determinant 
\begin{equation}
    \Delta(z)=z E_L+P^{-1} E_L-\frac{z}{2} I,
\end{equation}
 is positive-definite for all negative $z\in \mathbb{R}$,
where $I$ is the identity operator, $E_L=(1_{i\geq j})_{s\times s}$ is the lower triangular matrix with all nonzero entries equal to 1, and 
\begin{equation}
    P=\begin{pmatrix}
        a_{21} & & & \\
        a_{31} & a_{32} & &  \\
        \vdots & \vdots & \ddots &  \\
        a_{s1} & a_{s2} & \cdots & a_{s(s-1)} \\
        b_1 & b_2 & \cdots & b_{s-1} &  b_s \\
    \end{pmatrix},
\end{equation}
where $a_{ij}$ and $b_i$ are given in \eqref{Tableau}.
\end{theorem}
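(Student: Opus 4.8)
The plan is to exploit the convex‑splitting structure $E=E_l-E_n$, in which $E_l(u)=\tfrac12(Lu,u)$ is an exact positive quadratic (so $E_l'(u)=Lu$, using that $\mathcal{L}$ is symmetric and $\mathcal{L}_{1/2}^*\mathcal{L}_{1/2}=\mathcal{L}$), while $E_n$ is convex (since $E_n'=g$ and $E_n''=\beta I-f'\succeq 0$ by the Lipschitz bound \eqref{lip}; the MBE case is analogous via \eqref{LipMBE}). I would start from the scheme in the form \eqref{ETDRKrewritten}, set $v_{s+1}=u_{n+1}$, and introduce $w_i=v_i-u_n$ (so $w_1=0$), $\delta_i=v_{i+1}-v_i=w_{i+1}-w_i$, and $q_i=Lu_n-g(v_i)$. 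Telescoping $E(u_{n+1})-E(u_n)=\sum_{i=1}^s(E(v_{i+1})-E(v_i))$ and bounding each difference by the exact second‑order expansion of the quadratic $E_l$ together with the tangent‑line inequality $E_n(v_{i+1})-E_n(v_i)\ge(g(v_i),\delta_i)$ gives
\[
E(v_{i+1})-E(v_i)\le (Lv_i-g(v_i),\delta_i)+\tfrac12(L\delta_i,\delta_i)=(q_i+Lw_i,\delta_i)+\tfrac12(L\delta_i,\delta_i),
\]
and summing, with the pure‑quadratic terms telescoping (using $w_1=0$), collapses to
\[
E(u_{n+1})-E(u_n)\le\sum_{i=1}^s(q_i,\delta_i)+\tfrac12(Lw_{s+1},w_{s+1}).
\]

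Next I would cast the bound as a quadratic form over the stage index. Writing $p_i:=Gq_i$, the relations \eqref{ETDRKrewritten} say exactly that $\mathbf w:=(w_2,\dots,w_{s+1})^{T}=\tau P(\tau GL)\mathbf p$, with $P$ the matrix of the theorem; moreover $\boldsymbol\delta=E_L^{-1}\mathbf w$ since $E_L^{-1}$ is precisely the lower bidiagonal difference matrix and $w_1=0$, and $w_{s+1}=e_s^{T}\mathbf w$ with $e_s^{T}P(\tau GL)=b(\tau GL)^{T}$ the last row of $P$. Substituting turns the right‑hand side into a quadratic form in $\mathbf q=(q_1,\dots,q_s)^{T}$ with coefficients that are functions of $\tau GL$, and since it suffices to prove it is $\le 0$ for an arbitrary $\mathbf q$, I would then diagonalize: because $G$ and $\mathcal{L}$ (hence $L$ and $\tau GL$) are self‑adjoint and commute, on a common eigenmode with $G\mapsto\mu\le 0$, $L\mapsto\lambda>0$, and $\tau GL\mapsto z=\tau\mu\lambda\le 0$, and with $\xi\in\mathbb R^s$ the stage‑components of $\mathbf q$, that mode's contribution equals $\tau\mu\,\xi^{T}(E_L^{-1}P(z)+\tfrac z2\,b(z)b(z)^{T})\xi$, where one uses $z=\tau\mu\lambda$ to rewrite $\tfrac12\lambda\tau^2\mu^2=\tfrac12\tau\mu z$. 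Since $\tau\mu\le 0$, summing over modes reduces the whole statement to showing that $M(z):=E_L^{-1}P(z)+\tfrac z2\,b(z)b(z)^{T}$ is positive semi‑definite for every $z<0$.

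The last step is to identify $M(z)$ with $\Delta(z)$ up to an irrelevant antisymmetric term. Assuming $P(z)$ invertible (for the ETDRK schemes considered its diagonal entries $a_{i,i-1},b_s$ remain nonzero for $z<0$; otherwise one extends by continuity), the congruence $\xi=P(z)^{-1}E_L\zeta$, together with $b(z)^{T}P(z)^{-1}=e_s^{T}$ (as $b^{T}$ is the last row of $P$) and $e_s^{T}E_L=\mathbf{1}^{T}$, turns $\xi^{T}M(z)\xi$ into $\zeta^{T}((P(z)^{-1}E_L)^{T}+\tfrac z2\mathbf{1}\mathbf{1}^{T})\zeta$. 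On the other hand $E_L+E_L^{T}=I+\mathbf{1}\mathbf{1}^{T}$, so $\mathrm{sym}\,\Delta(z)=\mathrm{sym}(P(z)^{-1}E_L)+z\,\mathrm{sym}(E_L)-\tfrac z2 I=\mathrm{sym}(P(z)^{-1}E_L)+\tfrac z2\mathbf{1}\mathbf{1}^{T}$, which is exactly the symmetric part of the matrix appearing in $\xi^{T}M(z)\xi$. Hence $M(z)\succeq 0$ is equivalent to $\mathrm{sym}\,\Delta(z)\succeq 0$, and the latter is implied by the positive‑definiteness of $\Delta(z)$ assumed in the theorem; therefore $E(u_{n+1})\le E(u_n)$.

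I expect the main obstacle to be organizational rather than computational: recognizing the right quadratic form — that $E_L^{-1}$ is the discrete difference operator, that $P$ maps the stage ``slopes'' $p_j$ to the increments $w_i$, and that the correct congruence is by $P^{-1}E_L$ — and then noticing that $\Delta(z)$ differs from the honest quadratic‑form matrix $P^{-1}E_L+\tfrac z2\mathbf{1}\mathbf{1}^{T}$ only by an antisymmetric matrix, which is invisible to positive‑definiteness, so that the condition stated in the theorem is precisely what the energy estimate needs. The convexity and exact‑quadratic expansions, and the modal decomposition once commutativity of $G$ and $\mathcal{L}$ is granted (as holds for all the examples), are routine.
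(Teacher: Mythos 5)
Your proposal is correct, and it follows the same overall skeleton as the paper (telescope the energy over the stages, use a one\mbox{-}sided convexity estimate per stage, invert the stage relations \eqref{ETDRKrewritten} through $P$ and $E_L$, and reduce everything to positivity of a matrix governed by $\Delta(z)$), but the execution is genuinely different in several places. The paper bounds each increment with the Lipschitz-based inequality \eqref{Lv-g} and keeps the whole estimate as a quadratic form in the stage differences $v_{k+1}-v_k$, so that $\Delta(z)$ appears directly (plus a discarded $-\tfrac{\beta}{2}\sum\|v_{k+1}-v_k\|^2$); you instead use the convex-splitting/tangent-line estimate (the alternative the paper only sketches in Remark \ref{convsprmk}), expand the quadratic part of $E$ exactly so it telescopes to the single boundary term $\tfrac12(Lw_{s+1},w_{s+1})$, and write the bound as a quadratic form in the residuals $q_i=Lu_n-g(v_i)$, which avoids $G^{-1}$ altogether (a small gain, e.g.\ for the zero mode of Cahn--Hilliard where $G=\Delta$ is not invertible). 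You then pass to common eigenmodes of $G$ and $L$ and recover the stated condition through the congruence $\xi=P^{-1}E_L\zeta$, using $b^TP^{-1}=e_s^T$ and $E_L+E_L^T=I+\mathbf{1}\mathbf{1}^T$, to show that $\mathrm{sym}\bigl(E_L^{-1}P(z)+\tfrac z2\,b(z)b(z)^T\bigr)\succeq0$ iff $\mathrm{sym}\,\Delta(z)\succeq0$; this extra identification step is not needed in the paper's route but is correct, and it has the side benefit of exhibiting the condition as (essentially) the sharp requirement of this energy estimate, in the spirit of the paper's lemma on $\Delta'$. What each buys: the paper's argument is shorter and produces $\Delta$ immediately; yours makes explicit the simultaneous diagonalization (commuting, self-adjoint $G$ and $\mathcal L$) that the paper also uses implicitly when it declares the operator-valued quadratic form sign-definite, and it isolates cleanly that only convexity of $E_n$, not the full strength of \eqref{lip}, is needed at the per-stage step. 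Two minor caveats: deduce convexity of $E_n$ from \eqref{lip} via monotonicity of $g$ (Cauchy--Schwarz), since the norm-Lipschitz hypothesis does not literally give the pointwise bound $E_n''\succeq0$ you quote (also note the paper's displayed $E_n$ has a sign typo that you silently corrected); and invertibility of $P(z)$, which you flag, is in any case implicit in the theorem's use of $P^{-1}$.
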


\begin{proof}

We first compute the difference in the energy and derive a key inequality. 

Since the function $f$ is Lipschitz continuous, given any $v, u$,  we have
\begin{equation}\label{lipineq}
\begin{aligned}
        \left(F(v)-F(u),1\right) &\leq (v-u,f(u))+\frac{C_L}{2} (v-u,v-u)\\
        &=- (v-u,g(u))+\beta (v-u, u) +\frac{C_L}{2} (v-u,v-u).
\end{aligned}
\end{equation}
On the other hand,
\begin{equation}
    \begin{aligned}
            \frac{1}{2} \left(\int_\Omega |\mathcal{L} _{1/2} v|^2-|\mathcal{L} _{1/2} u|^2 dx\right) &= \frac{1}{2} \left((v,\mathcal{L}  v)-(u,\mathcal{L}  u)\right)  \\
            &=(v-u,\mathcal{L}  v)-\frac{1}{2} (v-u,\mathcal{L} (v-u))\\
            &=(v-u,Lv) - \beta (v-u,v) -\frac{1}{2} (v-u,\mathcal{L} (v-u)),
    \end{aligned}
\end{equation}
where we  used the identity 
$$[a,a]-[b,b]=2[a-b,a]-[a-b,a-b]$$
which is valid for all $a,b$ and any bilinear form $[\cdot,\cdot]$. Therefore, combining these two parts we derive
\begin{equation}\label{Lv-g}
\begin{aligned}
    E(v)-E(u)\leq (v-u,Lv- g(u))-\frac{1}{2} (v-u, L(v-u)) - \frac{\beta-C_L}{2} (v-u, v-u).
\end{aligned}
\end{equation}
This vital inequality holds with the Lipschitz condition and if we take $\beta\geq C_L$, the second term is naturally non-positive. 

Now we focus on the ETDRK scheme (\ref{ETDRKrewritten}), the first line of which is $v_1=u_n$, and the rest can be rewritten as the following system:
\begin{equation}
        \begin{pmatrix}
        v_2-v_1 \\ v_3-v_1 \\ ...\\v_{s+1}-v_1
    \end{pmatrix} = \tau P 
    \begin{pmatrix}
        G(Lv_1-g(v_1)) \\ G(Lv_1-g(v_2)) \\ ...\\G(Lv_1-g(v_s))
    \end{pmatrix},
\end{equation}
which is equivalent to
\begin{equation}\label{Lv-g2}
        \begin{pmatrix}
        Lv_1-g(v_1) \\ Lv_1-g(v_2) \\ ...\\Lv_1-g(v_s)
    \end{pmatrix}=
    \frac{1}{\tau}  P^{-1}
    \begin{pmatrix}
                G^{-1}(v_2-v_1) \\ G^{-1}(v_3-v_1) \\ ...\\G^{-1}(v_{s+1}-v_1)
    \end{pmatrix}=  \frac{1}{\tau} P^{-1} E_L
    \begin{pmatrix}
        G^{-1}(v_2-v_1) \\ G^{-1}(v_3-v_2) \\ ...\\G^{-1}(v_{s+1}-v_s)
    \end{pmatrix}.
\end{equation}
Therefore,
\begin{equation}\label{ineqtheorem}
    \begin{aligned}
        E(u_{n+1})&-E(u_n)
        =\sum_{k=1}^s E(v_{k+1})-E(v_{k}) \\
        \leq& \sum_{k=1}^s \left( v_{k+1}-v_{k},Lv_{k+1}-g(v_{k}) \right)
          - \frac{1}{2} \sum_{k=1}^s (v_{k+1}-v_{k},L(v_{k+1}-v_{k})) - \frac{\beta-C_L}{2} \sum_{k=1}^s \|v_{k+1}-v_{k}\|^2\\
        =&\sum_{k=1}^s \left( (v_{k+1}-v_k,Lv_{k+1}-Lv_1)+(v_{k+1}-v_k,Lv_1-g(v_k) \right) \\
        &- \frac{1}{2}\sum_{k=1}^s (v_{k+1}-v_{k},L(v_{k+1}-v_{k}))- \frac{\beta-C_L}{2} \sum_{k=1}^s \|v_{k+1}-v_{k}\|^2 \quad \quad (\text{using} \ (\ref{Lv-g2}))\\
        =&\frac{ 1}{\tau}\sum_{k=1}^s \sum_{j=1}^k (v_{k+1}-v_k,G^{-1}(\tau G L) (v_{j+1}-v_j))+(v_{k+1}-v_k,G^{-1} (P^{-1} E_L)_{kj} (v_{j+1}-v_j))\\
        & - \frac{1}{2\tau} \sum_{k=1}^s (v_{k+1}-v_{k},G^{-1} (\tau GL) (v_{k+1}-v_{k}))- \frac{\beta-C_L}{2} \sum_{k=1}^s \|v_{k+1}-v_{k}\|^2\\
        =&\frac{1}{\tau}\sum_{k,j=1}^s (v_{k+1}-v_k,G^{-1}\Delta_{kj}(\tau GL) (v_{j+1}-v_j))- \frac{\beta-C_L}{2} \sum_{k=1}^s \|v_{k+1}-v_{k}\|^2,
    \end{aligned}
\end{equation}
where $\Delta(z)=z E_L+P^{-1} E_L-\frac{z}{2} I$ with $ z=\tau G L$. Since $G$ is a negative operator and $\beta\geq C_L$, the discrete energy unconditionally decreases if $\Delta(z)$ is positive-definite for all negative $z\in R$.

\end{proof}

\begin{remark}
For the MBE model, recall that the nonlinear term is Lipschitz continuous as a function of $\nabla u$, and meanwhile the convex splitting of the energy is also different (see \cite{epitaxy4,epxywithornot} for more details). However, all analysis in the proof can be carried out in the similar way. To keep the presentation short, we omit the proof. 
\end{remark}

\begin{remark}\label{convsprmk}
    The inequality (\ref{Lv-g}) in the proof plays a very important role. It is the only place where we apply the Lipschitz condition. Alternatively, we can also replace the Lipschitz condition with a convex splitting approach since
    \begin{equation}
\begin{aligned}
        E(v)-E(u)
        &\leq \left(v-u, \frac{\delta E_l}{\delta v} (v)-\frac{\delta E_n}{\delta u} (u)\right)\\
        &=\left(v-u,\mathcal{L}v+f(u)\right)\\
        &=\left(v-u,Lv-g(u)\right).
\end{aligned}
\end{equation}
The determinant of this version is slightly different, while the rest of the proof is the same. 
\end{remark}
\begin{lemma}
    The positive-definiteness of $\frac 12 (\Delta+\Delta^T)$ is equivalent to the positive-definiteness of 
    \begin{equation*}
        \Delta' = z P E P^T + E_L P^T + P E_L^T.
    \end{equation*}
    \begin{proof}
    Notice that
        \begin{equation*}
            E_L + E_L^T = E + I,
        \end{equation*}
        and
        \begin{equation*}
            P (\Delta+\Delta^T) P^T  =  P (E_L+E_L^T-I)P^T +E_L P^T + P E_L^T = \Delta'.
        \end{equation*}
    \end{proof}
\end{lemma}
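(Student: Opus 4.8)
The plan is to exploit the invertibility of $P$ together with the fact that a congruence transformation by an invertible matrix preserves positive-definiteness of the symmetric part. Concretely, for any matrix $M$ the symmetrizer of $PMP^{T}$ is $P$ times the symmetrizer of $M$ times $P^{T}$, and for a symmetric matrix $S$ and invertible $P$ one has that $S$ is positive-definite if and only if $PSP^{T}$ is (Sylvester's law of inertia). Here $P$ is lower triangular with diagonal entries $a_{21},a_{32},\dots,a_{s,s-1},b_{s}$, which are nonzero $\phi$-type functions for every ETDRK scheme under consideration — and invertibility of $P$ is in any case already implicitly used in Theorem~\ref{mainth}, where $P^{-1}$ appears. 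The harmless scalar factor $\tfrac12$ plays no role, so it suffices to show $P(\Delta+\Delta^{T})P^{T}=\Delta'$.

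First I would compute $\Delta+\Delta^{T}$ directly from $\Delta(z)=zE_{L}+P^{-1}E_{L}-\tfrac{z}{2}I$. Since $z$ is scalar and $\tfrac{z}{2}I$ is symmetric, this gives
\[
\Delta+\Delta^{T}=z\bigl(E_{L}+E_{L}^{T}\bigr)-zI+P^{-1}E_{L}+E_{L}^{T}P^{-T}.
\]
Next I would use the elementary identity $E_{L}+E_{L}^{T}=E+I$, where $E$ denotes the all-ones matrix (the diagonal entries of $E_{L}+E_{L}^{T}$ equal $2$, the off-diagonal ones equal $1$); hence $E_{L}+E_{L}^{T}-I=E$ and
\[
\Delta+\Delta^{T}=zE+P^{-1}E_{L}+E_{L}^{T}P^{-T}.
\]
Finally, conjugating by $P$ on the left and $P^{T}$ on the right and using $PP^{-1}=I$ together with $P^{-T}P^{T}=I$ (where $P^{-T}:=(P^{-1})^{T}=(P^{T})^{-1}$),
\[
P(\Delta+\Delta^{T})P^{T}=zPEP^{T}+E_{L}P^{T}+PE_{L}^{T}=\Delta',
\]
which is exactly the claimed matrix.

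There is essentially no deep obstacle here: the only points requiring care are the transpose bookkeeping in the last display — in particular recognizing that $P(E_{L}^{T}P^{-T})P^{T}=PE_{L}^{T}$ — and the observation that $P$ is invertible so that the congruence is legitimate. Once $P(\Delta+\Delta^{T})P^{T}=\Delta'$ is established, the equivalence of positive-definiteness follows at once, since congruence by an invertible matrix preserves the signs of all eigenvalues of the symmetrizer, and this reasoning applies pointwise for each negative scalar $z\in\mathbb{R}$, which is precisely the range of $z$ in the hypothesis of Theorem~\ref{mainth}.
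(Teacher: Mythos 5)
Your proposal is correct and follows essentially the same route as the paper: the identity $E_L+E_L^T=E+I$ together with the congruence $P(\Delta+\Delta^T)P^T=\Delta'$, with positive-definiteness preserved because $P$ is invertible. You merely spell out the transpose bookkeeping and Sylvester's law of inertia explicitly (and correctly keep the factor $z$ that the paper's displayed identity elides), which matches the intended argument.
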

\begin{remark}
$\Delta'$ is already symmetric and helps us in the understanding and subsequent proof regarding positive-definiteness.
\end{remark}

\section{Examples of energy decreasing ETDRK  schemes}\label{sec3}

In this section we consider some examples of ETDRK methods up to order four.

 We first recall
\begin{equation}
  \phi_0 (z) = e^z,\quad \phi_{k+1} = \frac{\phi_k(z)-\phi_k(0)}{z},\quad \phi_k(0)=\frac{1}{k!}.
\end{equation}
We will also denote $\phi_{i,j}(z)=\phi_i(c_j z)$ for simplicity.

\subsection{First-order ETD scheme}
We start with the very simple ETD1 scheme
\begin{equation}
        u_{n+1}=e^{\tau GL} u_n + \left(I-e^{\tau GL}\right)L^{-1}g(u_n),
\end{equation}
whose Butcher Tableau reads
\begin{equation}
\begin{array}{c|c}
    0   &    0            \\
    \hline
        & \phi_1(\tau GL)
\end{array}.
\end{equation}

\begin{corollary}
    The first-order ETD scheme unconditionally decreases the discrete energy.
\end{corollary}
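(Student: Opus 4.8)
The plan is to apply Theorem \ref{mainth} directly to the ETD1 scheme, which is the $s=1$ case of the general framework \eqref{ETDRKoriginform}. For this scheme the only intermediate stage is $v_1=u_n$ and there are no $a_{ij}$ coefficients at all; the matrix $P$ collapses to the $1\times 1$ matrix $P=(b_1)=(\phi_1(\tau GL))$, and $E_L=(1)$ is the $1\times 1$ identity. The task therefore reduces to checking that the scalar (operator-valued) quantity
\begin{equation*}
    \Delta(z)=z E_L+P^{-1}E_L-\frac{z}{2}I=\frac{z}{2}+\phi_1(z)^{-1}=\frac{z}{2}+\frac{z}{e^z-1}
\end{equation*}
is positive for all negative real $z$ (and, via Lemma 2.2, equivalently that $\Delta'(z)=zPEP^T+E_LP^T+PE_L^T$ is positive, though here the direct computation is already one-dimensional).

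First I would note that $\phi_1(z)=(e^z-1)/z$ is positive for all real $z\neq 0$, so $\phi_1(z)^{-1}$ is well-defined and positive, and in particular $\Delta(z)$ makes sense. Then the core estimate is the elementary inequality
\begin{equation*}
    \frac{z}{e^z-1}+\frac{z}{2}>0 \qquad \text{for all } z<0.
\end{equation*}
I would establish this by setting $h(z)=\dfrac{z}{e^z-1}+\dfrac{z}{2}$ and observing that $h$ is, up to sign, the generating function of the Bernoulli numbers with the odd part removed: indeed $\dfrac{z}{e^z-1}+\dfrac{z}{2}=\dfrac{z}{2}\coth\!\big(\dfrac{z}{2}\big)$, which is an even function, strictly positive away from the origin, with value $1$ at $z=0$. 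Concretely, for $z<0$ one can write $h(z)=\dfrac{z}{2}\cdot\dfrac{e^{z/2}+e^{-z/2}}{e^{z/2}-e^{-z/2}}$ and check that numerator and denominator of the fraction have the same sign as $z$, making the product positive. This shows $\Delta(z)>0$ for every $z<0$, hence $\Delta$ is positive-definite in the sense of the theorem (a positive scalar function of the operator $\tau GL$ yields a positive-definite operator by Lemma 2.1, since the relevant eigenvalues $z=\tau G\lambda_i$ are negative).

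With the positive-definiteness of $\Delta(\tau GL)$ in hand, Theorem \ref{mainth} applies verbatim and gives unconditional energy decrease, completing the proof. I do not anticipate a serious obstacle here: the only mild subtlety is being careful that the formal scalar manipulation $\Delta(z)=z/2+z/(e^z-1)$ is legitimate at the operator level, i.e. that $L$ is positive-definite and $G$ negative so that $z=\tau GL$ has spectrum on the negative real axis where $\phi_1$ is invertible and $h$ is positive; this is exactly the setting guaranteed by the hypotheses on $G$ and $\mathcal{L}$ together with Lemma 2.1. If one prefers to avoid even this, the identification $h(z)=\tfrac{z}{2}\coth(z/2)\ge 1>0$ can be replaced by the series $\tfrac{z}{2}\coth(z/2)=\sum_{k\ge 0}\tfrac{B_{2k}}{(2k)!}z^{2k}$ or by a direct monotonicity argument on $(-\infty,0)$, but for a corollary the one-line $\coth$ identity is the cleanest route.
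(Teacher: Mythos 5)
Your proposal is correct and follows essentially the same route as the paper: reduce to the $1\times 1$ case $P=(\phi_1)$ and check that the scalar $\Delta(z)=z+1/\phi_1(z)-z/2=\tfrac{z}{2}+\tfrac{z}{e^z-1}$ is positive for $z<0$, which is exactly the paper's computation. The only difference is that you spell out the positivity via the identity $\tfrac{z}{2}\coth(z/2)\ge 1$, a detail the paper simply asserts.
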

\begin{proof}
    The only element in $P$ is $b_1=\phi_1$, whence the determinant $\Delta=z+1/\phi_1(z)-z/2=z e^z / (e^z-1)-z/2$. Therefore, $\Delta\geq 0$ always holds for $z\leq 0$. The theorem (\ref{mainth})  guarantees that the discrete energy of the ETD1 solution unconditionally decreases.
\end{proof}

\subsection{Second-order ETDRK scheme}
In \cite{ETDRK2} it has been proven that the following ETDRK2 scheme is unconditionally energy stable, while here we could make use of the framework to obtain the same result. In fact, the estimate here is finer than the result in \cite{ETDRK2}. 

We consider the following ETDRK2 scheme
\begin{eqnarray}
&& v=e^{\tau GL} u_n + \left(I-e^{\tau GL}\right)L^{-1}g(u_n),\\
&& u_{n+1}=v-\frac{1}{\tau} \left(e^{\tau GL}-I-\tau GL\right)(GL)^{-2}(Gg(v)-Gg(u_n)).
\end{eqnarray}
The Butcher Tableau reads
\begin{equation}
\begin{array}{c|cc}
    0   &    0     &       \\
    1   &    \phi_1     &       \\
    \hline
        & \phi_1-\phi_2  &\phi_2
\end{array}.
\end{equation}
\begin{corollary}
    The second-order ETDRK scheme unconditionally decreases the discrete energy.
\end{corollary}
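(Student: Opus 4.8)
The plan is to invoke Theorem~\ref{mainth}: everything reduces to verifying that, for the ETDRK2 tableau, $\Delta(z)=zE_L+P^{-1}E_L-\tfrac z2 I$ is positive-definite for every negative $z$. It is cleaner to work instead with the symmetric matrix $\Delta'(z)=zPEP^{T}+E_LP^{T}+PE_L^{T}$ (which has the same definiteness as $\Delta$, by Lemma~2.2). Here $s=2$, $E_L=\left(\begin{smallmatrix}1&0\\1&1\end{smallmatrix}\right)$, $E=\left(\begin{smallmatrix}1&1\\1&1\end{smallmatrix}\right)$, and $P=\left(\begin{smallmatrix}\phi_1&0\\ \phi_1-\phi_2&\phi_2\end{smallmatrix}\right)$ with $\phi_1(z)=(e^z-1)/z$ and $\phi_2(z)=(e^z-z-1)/z^2$. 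Because $P$ is lower triangular, $PE=\phi_1\left(\begin{smallmatrix}1&1\\1&1\end{smallmatrix}\right)$ is rank one, so the products collapse and one gets $PEP^{T}=\phi_1^2\left(\begin{smallmatrix}1&1\\1&1\end{smallmatrix}\right)$ and $E_LP^{T}+PE_L^{T}=\left(\begin{smallmatrix}2\phi_1&2\phi_1-\phi_2\\2\phi_1-\phi_2&2\phi_1\end{smallmatrix}\right)$, hence
\[
\Delta'(z)=\begin{pmatrix} z\phi_1^2+2\phi_1 & z\phi_1^2+2\phi_1-\phi_2\\[2pt] z\phi_1^2+2\phi_1-\phi_2 & z\phi_1^2+2\phi_1 \end{pmatrix}.
\]

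By Sylvester's criterion, $\Delta'(z)$ is positive-definite iff $z\phi_1^2+2\phi_1>0$ and $\det\Delta'(z)>0$. The first is immediate: $\phi_1(z)=(e^z-1)/z>0$ for all $z\neq0$, and $z\phi_1+2=e^z+1>0$, so $z\phi_1^2+2\phi_1=\phi_1(z\phi_1+2)>0$. For the determinant, the special $2\times2$ structure yields a difference of squares, and I expect it to factor as
\[
\det\Delta'(z)=\big((z\phi_1^2+2\phi_1)-(z\phi_1^2+2\phi_1-\phi_2)\big)\big((z\phi_1^2+2\phi_1)+(z\phi_1^2+2\phi_1-\phi_2)\big)=\phi_2\,\big(2z\phi_1^2+4\phi_1-\phi_2\big).
\]
Since $\phi_2(z)=(e^z-z-1)/z^2>0$ for $z\neq0$ (strict convexity of $e^z$ at $0$), it only remains to prove $2z\phi_1^2+4\phi_1-\phi_2>0$ for every $z<0$.

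Plugging in the explicit forms and multiplying by $z^2>0$, this last inequality becomes $2z(e^z-1)^2+4z(e^z-1)-(e^z-z-1)>0$; writing $z=-t$ with $t>0$, a short simplification (setting $q=e^{-t}$) collapses it to
\[
\psi(t):=t+1-e^{-t}-2t\,e^{-2t}>0,\qquad t>0.
\]
I would finish with a monotonicity argument. One checks $\psi(0)=0$, $\psi'(0)=0$, $\psi''(0)=7>0$, and $\psi''(t)=e^{-t}\big(8(1-t)e^{-t}-1\big)$, where the bracket is $7$ at $t=0$, decreasing on $(0,2)$, and negative on $[1,\infty)$, so it changes sign exactly once; hence $\psi'$ first increases, then decreases, and since $\psi'(0)=0$ and $\psi'(t)\to1$ as $t\to\infty$, we conclude $\psi'(t)>0$ for all $t>0$, so $\psi$ is strictly increasing and positive on $(0,\infty)$. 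The main obstacle is exactly this scalar estimate: both $\psi$ and $\psi'$ vanish at the endpoint $t=0$, so naive bounds fail and one must track the sign of $\psi''$ carefully; everything preceding it is routine linear algebra that exploits the triangular form of $P$.
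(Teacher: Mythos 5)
Your proposal is correct, and it proves exactly what is needed — that the ETDRK2 tableau makes $\Delta(z)$ of Theorem~2.1 positive-definite for all $z<0$ — but by a genuinely different verification route than the paper. The paper stays with the unsymmetrized $\Delta(z)=zE_L+P^{-1}E_L-\tfrac z2 I$ and simply splits it as $(z+1/\phi_1)E_L+\mathrm{diag}\bigl(0,\,1/\phi_2-1/\phi_1\bigr)-\tfrac z2 I$: since $z+1/\phi_1=ze^z/(e^z-1)>0$ for $z<0$, $E_L$ is positive-definite, $\phi_1>\phi_2>0$, and $-\tfrac z2 I$ is positive-definite, the conclusion follows with no minors and no calculus. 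You instead pass to the congruent symmetric matrix $\Delta'=zPEP^{T}+E_LP^{T}+PE_L^{T}$ of the paper's Lemma~2.2 (used there only for the third-order schemes), exploit the row-sum identity $PE=\phi_1E$ to get $\Delta'$ in closed form, and check Sylvester's criterion; your algebra is right (the factorization $\det\Delta'=\phi_2\,(2z\phi_1^2+4\phi_1-\phi_2)$ and the reduction to $\psi(t)=t+1-e^{-t}-2te^{-2t}>0$ both check out), and your sign analysis of $\psi''(t)=e^{-t}\bigl(8(1-t)e^{-t}-1\bigr)$ — single sign change of the bracket, so $\psi'$ increases from $0$ then decreases to $1$ — is a valid way around the degeneracy $\psi(0)=\psi'(0)=0$. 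What your route buys is a fully explicit, graph-free Sylvester verification in exactly the style the paper uses for Proposition~3.1 (where it partly leans on plotted minors), showing the $s=2$ case can be settled purely analytically; what the paper's route buys is brevity and a structural explanation of positivity via the two elementary facts $ze^z/(e^z-1)>0$ and $\phi_1>\phi_2$, avoiding the scalar estimate entirely.
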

\begin{proof}
    According to the Butcher Tableau, we have
\begin{equation}\nonumber
    P=\begin{pmatrix}
        \phi_1 && 0 \\
        \phi_1-\phi_2 && \phi_2
    \end{pmatrix}, \quad
    P^{-1}=\begin{pmatrix}
        1/\phi_1 && 0 \\
         (\phi_2-\phi_1)/(\phi_1 \phi_2)    &&  1/\phi_2
    \end{pmatrix},
\end{equation}
whence the determinant reads
\begin{equation}
    \Delta=\begin{pmatrix}
        z+1/\phi_1 && 0 \\
        z+1/\phi_1 && z+1/\phi_2
    \end{pmatrix} -\frac{z}{2} I_{2} =
        (z+1/\phi_1) E_L+ \begin{pmatrix}
            0 && 0 \\
            0 && 1/\phi_2-1/\phi_1
        \end{pmatrix} -\frac{z}{2} I_{2}.
\end{equation}
The first term is positive-definite since $(z+1/\phi_1)$ is positive and $E_L$ is positive-definite, the second term is also positive-definite because $\phi_1>\phi_2>0$, and the third term is obviously positive-definite. Therefore, $\Delta$ is positive-definite, and the ETDRK2 scheme is unconditionally energy stable.
\end{proof}

\subsection{Third-order ETDRK schemes}

In general, third-order ETDRK schemes need to satisfy the following order conditions:
\begin{center}\label{ordcond3}
\begin{tabular}{|c|c|}
\hline
  Order & Conditions \\ 
  \hline
  1 & $\psi_1=0$ \\
  \hline
  2 & $\psi_2=0$  \\
  2 & $\psi_{1,i}=0$ \\
  \hline
  3 & $\psi_3=0$ \\
  3 & $\sum_{i=1}^s b_i J \psi_{2,i}=0$ \\
\hline
\end{tabular}
\end{center}
where 
\begin{equation}\label{repsi}
\psi_i(z)=\phi_i(z)-\sum_{k=1}^s b_k c_k^{j-1}/(j-1)!,\quad \psi_{i,j}=\phi_i c_j^i-\sum_{k=1}^{j-1} a_{jk} c_k^{i-1}/(i-1)!, 
\end{equation}
and $J$ denotes arbitrary bounded operators. In particular,  the classical ETDRK3 scheme below from Cox and Matthews \cite{ETDstiff}  does not satisfy the conditions of our theorem (see Fig. \ref{eigenvalueETDRK3}) so it is not unconditionally energy decreasing.
\begin{equation}\label{EDETDRK3a}
	\begin{array}{c|ccc}
		0   &         &    &   \\
		\frac{1}{2} &   \frac{1}{2}\phi_{1,2}  &  & \\
		1   &    -\phi_{1,3}  &  2\phi_{1,3}   &       \\
		\hline
		& 4\phi_3-3\phi_2+\phi_1  & -8\phi_3+4\phi_2 & 4\phi_3-\phi_2
	\end{array}
\end{equation}
On the other hand,   all possible types of three-stage third-order ETDRK schemes are listed in  \cite{ETDRK2005a,EXPintegrator}. In fact, many of them with suitable coefficients could satisfy the requirement of our theorem, and  two of them are described with the Butcher Tableaux below:
\begin{equation}\label{EDETDRK3}
    \begin{array}{c|ccc}
    0   &         &    &   \\
    1 &   \phi_{1}  &  & \\
    \frac{2}{3}   &    \frac{2}{3}  \phi_{1,3}-\frac{4}{9}  \phi_{2,3}  &  \frac{4}{9} \phi_{2,3}   &       \\
    \hline
        & \frac{3}{4} \phi_1-\phi_2  & \phi_2-\frac{1}{2} \phi_1 & \frac{3}{4} \phi_1
\end{array}
\end{equation}
and
\begin{equation}\label{EDETDRK3b}
    \begin{array}{c|ccc}
    0   &         &    &   \\
    \frac{4}{9} &   \frac{4}{9} \phi_{1,2}  &  & \\
    \frac{2}{3}   &    \frac{2}{3}  \phi_{1,3}-\phi_{2,3}  &  \phi_{2,3}   &       \\
    \hline
        &  \phi_1-\frac{3}{2} \phi_2  & 0  & \frac{3}{2} \phi_2
\end{array}
\end{equation}
To begin with, we first present numerical evidences. We plot in Fig. \ref{eigenvalueETDRK3} the smallest eigenvalues of $\frac 12(\Delta+\Delta^T)$ for  the three  schemes above, and we observe that the smallest  eigenvalues of (\ref{EDETDRK3}) and (\ref{EDETDRK3b}) are positive, indicating that the conditions of our theorem are satisfied. In contrast, the smallest eigenvalues for the scheme (\ref{EDETDRK3a}) become negative when $Z$ approaches zero, indicating that the conditions of our theorem are violated. To rigorously show that the above two ETDRK schemes decrease the energy, we only need to prove the positive-definiteness of $\Delta'$ for the above schemes, which could be verified by checking all leading principal minors. 
\begin{proposition}\label{prop1}
    For the schemes (\ref{EDETDRK3}) and (\ref{EDETDRK3b}), the determinants of all leading principal minors of $\Delta'$ are positive, and thus $\Delta'$ is positive definite. Thus, the schemes (\ref{EDETDRK3}) and (\ref{EDETDRK3b}) unconditionally decrease the energy for general gradient flows \eqref{sgf}. 
\end{proposition}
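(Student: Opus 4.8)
The plan is to establish positive-definiteness of the symmetric matrix $\Delta'(z) = z\,P E P^T + E_L P^T + P E_L^T$ for each of the two schemes \eqref{EDETDRK3} and \eqref{EDETDRK3b}, using Sylvester's criterion: a symmetric matrix is positive-definite if and only if all its leading principal minors are positive. Since the schemes are $3$-stage, $P$ is $3\times 3$, so $\Delta'$ is $3\times 3$ and there are exactly three minors to check: the $(1,1)$ entry, the upper-left $2\times 2$ determinant, and the full $3\times 3$ determinant. Each of these will be an explicit function of $z\le 0$ built from the $\phi_k$ functions evaluated at $z$, $c_2 z$ and $c_3 z$ (with $c_2, c_3$ the abscissae of the scheme), and the task reduces to showing each is strictly positive on $(-\infty, 0]$.

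First I would write out $P$ explicitly for each scheme from its Butcher tableau, then form $PEP^T$, $E_L P^T$, and $PE_L^T$ (recall $E = (1)_{s\times s}$ the all-ones matrix and $E_L$ the lower-triangular all-ones matrix), and assemble $\Delta'(z)$. It is convenient to record the elementary facts $\phi_0(z)=e^z$, $\phi_1(z)=(e^z-1)/z$, $\phi_2(z)=(e^z-1-z)/z^2$, $\phi_3(z)=(e^z-1-z-z^2/2)/z^3$, together with the monotonicity/positivity properties on $z\le 0$: each $\phi_k(z)>0$, $\phi_k$ is decreasing in $z$, and $1/k! = \phi_k(0) \ge \phi_k(z)$ for... actually $\phi_k$ is decreasing so $\phi_k(z)\ge \phi_k(0)=1/k!$ for $z\le 0$; also the ordering $\phi_1(z) > \phi_2(z) > \phi_3(z) > 0$ holds for $z\le 0$. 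These inequalities, plus the order conditions already imposed on the coefficients (which force various cancellations, e.g. making the $(1,1)$ entry collapse to something like $2z\phi_1^2 + 2\phi_1 = 2\phi_1(z\phi_1+1) = 2\phi_1\cdot z e^z/(e^z-1)$, manifestly positive — the same quantity that appeared in the ETD1 corollary), are the tools that turn each minor into a provably-positive expression.

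The three minors should be handled in increasing order of difficulty. The first minor (the $(1,1)$ entry of $\Delta'$) I expect to reduce, as in the ETD1 case, to $2\phi_1(z)\cdot\bigl(z\phi_1(z)+1\bigr)$, which is positive for $z\le 0$ because $\phi_1>0$ and $z\phi_1(z)+1 = ze^z/(e^z-1) \ge 0$. The second ($2\times 2$) minor will be a more involved combination of $\phi_{1,2},\phi_{2,2},\phi_1,\phi_2$; I would try to rewrite it as a sum of terms each visibly nonnegative on $z\le 0$ — for instance by grouping into a perfect-square-like piece plus a positive remainder, or by factoring out a positive $\phi$ and bounding the bracket below using $\phi_1>\phi_2$ and $\phi_k\ge 1/k!$. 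The full $3\times 3$ determinant is the crux: it is a genuinely messy polynomial-in-$\phi_k(c_j z)$ expression, and there is no shortcut — one must expand it, simplify using the scheme's order conditions and the equilibrium conditions \eqref{equilcond} (which tie the row sums of $P$ to $(\chi_i(z)-1)/z = \phi_1(c_i z)\cdot(\text{stuff})$), and then prove positivity on the half-line $z\le 0$. I anticipate that after simplification the determinant factors as a product (or a sum of a few products) of the kind of elementary positive quantities already identified, possibly times an overall positive power of $z$ that cancels the singularities of the $\phi_k$ at $z=0$; the limiting value as $z\to 0^-$ should be checked separately and shown positive (this is exactly where the classical Cox–Matthews ETDRK3 \eqref{EDETDRK3a} fails, per Fig.~\ref{eigenvalueETDRK3}).

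The main obstacle, then, is purely the algebraic bookkeeping for the $3\times 3$ determinant: keeping the $\phi_k(c_j z)$ terms organized, exploiting every available order condition to collapse terms, and packaging the result as a manifestly positive expression valid uniformly for all $z \le 0$ including the limit $z\to 0^-$. A practical strategy to contain this is to substitute $w = -z \ge 0$ and the closed forms $\phi_k(z) = g_k(z)/z^k$ with $g_k$ entire, clear all denominators so that the minor becomes (positive power of $w$) times an entire function of $w$, and then prove that entire function is positive for $w\ge 0$ — e.g. by checking its value and derivatives at $w=0$ and showing a sign-definite series expansion, or by a convexity argument. I would present the final proof as: (i) exhibit $\Delta'$ for each scheme; (ii) evaluate the three leading principal minors in closed form; (iii) verify each is $>0$ for $z\le 0$ using the catalogued $\phi_k$-inequalities; (iv) conclude via Sylvester's criterion and Theorem~\ref{mainth}.
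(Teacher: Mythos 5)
Your overall strategy coincides with the paper's: form the symmetrized matrix $\Delta'(z)=zPEP^T+E_LP^T+PE_L^T$ for each of the two tableaux and apply Sylvester's criterion to its three leading principal minors, treating the limit $z\to 0^-$ and the far field $z\to-\infty$ with care. The gap is that the crux of the proposition --- proving the $2\times 2$ and $3\times 3$ minors are positive for \emph{all} $z<0$ --- is not actually carried out, and the structure you anticipate making it easy does not exist. You expect the order/equilibrium conditions to collapse each minor into a product, or a sum of a few products, of manifestly positive elementary quantities; but when one expands these determinants (as the paper does explicitly), the $2\times2$ minor for \eqref{EDETDRK3} is, up to the factor $z^{-4}$, a combination of roughly a dozen terms of the form $z^m e^{kz/3}$ with both signs, and the $3\times3$ minor has several dozen such terms (with exponents up to $e^{11z/3}$, resp.\ $e^{28z/9}$ for \eqref{EDETDRK3b}); no factorization into visibly positive pieces, perfect squares, or sign-definite series is available, and a naive term-by-term bound using $\phi_1>\phi_2>\phi_3>0$ is not sufficient because of the mixed signs. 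The paper's resolution is a hybrid argument you do not supply: on a bounded interval ($z\in[-3,0)$, resp.\ $[-6,0)$) positivity is verified numerically/graphically together with monotonicity of the relevant functions, while for large $|z|$ the non-exponential polynomial part $P_2$ (resp.\ $P_3$) is shown by a crude triangle inequality to dominate the absolute value of all exponentially decaying terms $R_2$ (resp.\ $R_3$), e.g.\ $P_2(z)>6>2.03>|R_2(z)|$ for $z<-3$. Without either this splitting or a genuinely new analytic positivity proof, your step (iii) remains an unproven hope rather than an argument, and it is precisely the content of the proposition.

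Two smaller corrections to your toolkit: for $z\le 0$ the functions $\phi_k$ are \emph{increasing} in $z$, so $\phi_k(z)\le \phi_k(0)=1/k!$, not $\ge$ as you state; and the $(1,1)$ entry of $\Delta'$ is $z\,a_{21}^2+2a_{21}=a_{21}\bigl(z\,a_{21}+2\bigr)$, which for \eqref{EDETDRK3} equals $\bigl(e^{2z}-1\bigr)/z$ (and $\bigl(e^{8z/9}-1\bigr)/z$ for \eqref{EDETDRK3b}), not the expression $2\phi_1(z\phi_1+1)=2\phi_1 e^z$ you wrote; the positivity conclusion for this first minor is unaffected, but the slip illustrates that the explicit computations need to be done, not sketched.
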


\begin{proof}
According to (\ref{EDETDRK3}), we derive the expressions for the determinants of leading principal minors as follows
\begin{equation*}
    \begin{aligned}
        \text{Det}(\Delta'_{1\times 1})= &\Delta'_{11}=\frac{e^z-1}{z}>0
        ;\\
        \text{Det}(\Delta'_{2\times2}) = &\frac{1}{9 z^4}(-9 e^{4z/3} + 18 e^{2z/3} +6z - 6z e^{2z/3} - 18z e^{5z/3} +18z e^{7z/3} -9z^2 e^{2z} -9z^2 e^{4z/3}\\& +6z^2 e^{5z/3} +8z^2 -9);\\
        \text{Det}(\Delta')=&\frac{1}{36 z^6} (252 z + 162 e^{2z} -162 e^{2z/3} + 324 e^{5z/3} - 162e^{8z/3} - 324 e^z + 99z e^{2z} + 54z e^{3z} \\
        &- 81z e^{4z} -180 z e^{2z/3} - 45z e^{4z/3} + 378z e^{5z/3} +54z e^{7z/3} - 468 z e^{8z/3} -9z e^{10z/3} \\
        &+ 270 z e^{11z/3} -72z^2 e^z + 78 z^2 e^{2z} - 18z^2 e^{3z} + 83z^3 e^{2z} +6z^2 e^{2z/3} -54z^2 e^{4z/3} \\
        &+ 288 z^2 e^{5z/3} + 27z^3 e^{4z/3} + 18z^3 e^{5z/3} -54 z^2 e^{7z/3} -240 z^2 e^{8z/3} - 324 z e^z + 66 z^2\\
        &-20 z^3 +162 ).
    \end{aligned}
\end{equation*}
It can be verified by Fig. \ref{ETDRK3detPDproof} that for $z$ with small absolute values,
\begin{equation*}
    \begin{aligned}
        &z^4 \text{Det}(\Delta'_{2\times2}) \geq 0.2, \, z\in [-3,-1], \quad &\text{Det}(\Delta'_{2\times2}) \geq 0.2,\, \forall z\in[-1,0),\\
        &z^6 \text{Det}(\Delta') \geq 0.1, \, z\in [-6,-1], \quad &\text{Det}(\Delta')\geq 0.1, \forall z\in[-1,0),
    \end{aligned}
\end{equation*}
where both $z^4 \text{Det}(\Delta')_{2\times2}$ and $z^6 \text{Det}(\Delta')$ are smooth decreasing functions and both $\text{Det}(\Delta')_{2\times2}$ and $\text{Det}(\Delta')$ are smooth increasing functions. \\
For $z$ with large scales, we use the non-exponential term to control the exponentially small term. Therefore, we denote the non-exponential term in $z^4 \text{Det}(\Delta'_{2\times 2})$ as $$P_2=(6z+8z^2-9)/9$$ and the exponential term as $$R_2=(-9 e^{4z/3} + 18 e^{2z/3} - 6z e^{2z/3} - 18z e^{5z/3} +18z e^{7z/3} -9z^2 e^{2z} -9z^2 e^{4z/3} +6z^2 e^{5z/3})/9.$$ 
When $z<-3$, 
$$P_2(z)>P_2(-3)=6,$$ 
$$|R_2| < (9e^{-4}+18e^{-2}+6\times3 e^{-2}+18\times3 e^{-5}+18\times3 e^{-7}+9\times3^2 e^{-2}+9\times3^2 e^{-4}+6\times3^2 e^{-5})/9<2.03,$$ where we use $\beta\leq |\beta|$ for every single term in $R_2$. Thus, $z^4 \text{Det}(\Delta'_{2\times 2})\geq P_2-|R_2| > 6 -2.03>0$ for all $z<-3$. \\
Similarly we could define the non-exponential term $P_3$ and the exponential term $R_3$ for $z^6 \text{Det}(\Delta)$, and for $z<-6$, we have $P_3(z)>P_3(-6)=148.5$, $|R_3(z)|<86$ and thus $z^6 \text{Det}(\Delta')>P_3(z)-|R_3(z)|>0$. Combining all the above results, we have $\text{Det}(\Delta'_{2\times 2})>0, \text{and } \text{Det}(\Delta')>0$ for all negative $z$.

Therefore, all the determinants of leading principal minors are positive, i.e. $\Delta'$ is positive-definite.

\begin{figure}[H]
	\centering
	\includegraphics[width=14cm]{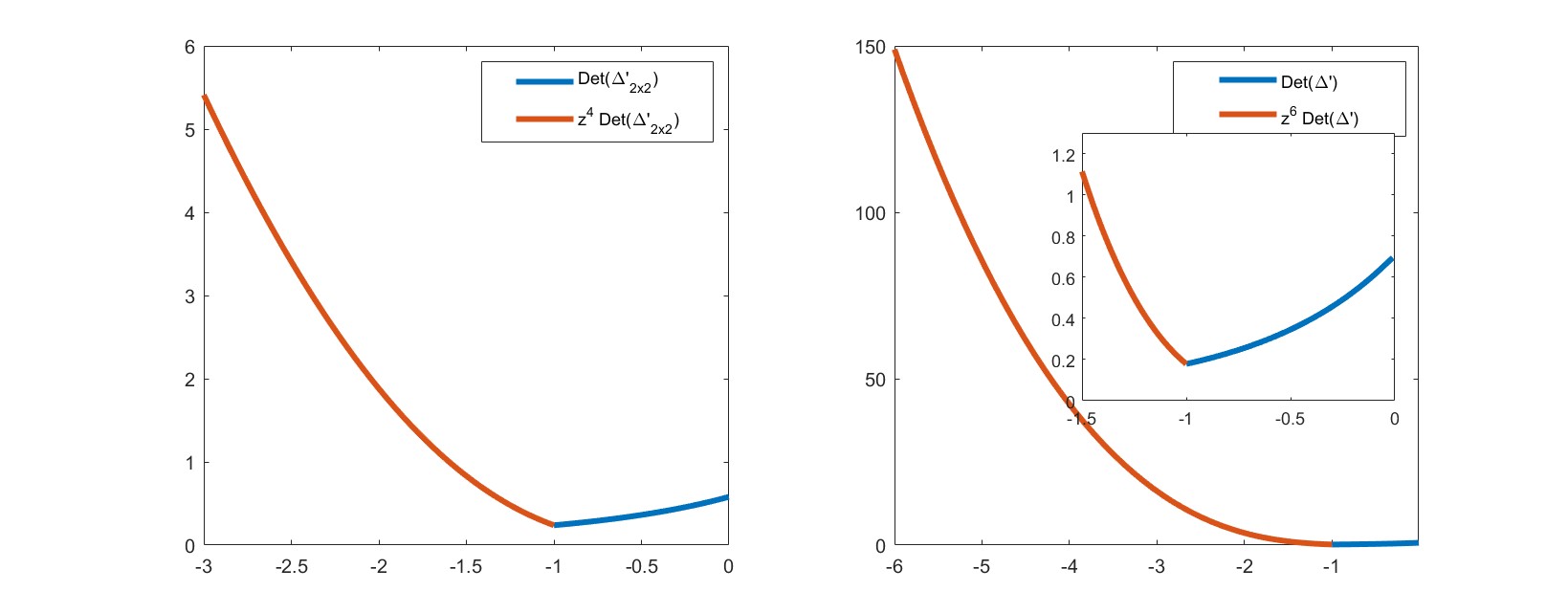}
	\caption{Positive-definiteness test: Determinants of leading principal minors}
	\label{ETDRK3detPDproof}
\end{figure}

For the scheme (\ref{EDETDRK3b}), the proof is the same and we just present the determinants of leading principal minors here.
\begin{equation*}
    \begin{aligned}
        \text{Det}(\Delta'_{1\times 1})= &\Delta'_{11}=\frac{e^{8z/9}-1}{z}>0;\quad \quad
        \text{Det}(\Delta'_{2\times2}) = -\frac{1}{16 z^4}(36z - 162 e^{2z/3} + 81 e^{4z/3} - 36 z e^{2z/3}\\
        &+ 72 z e^{10 z/9} - 72 z e^{16 z/9} + 16 z^2 e^{4z/3} + 16 z^2 e^{8 z/9} + 16 z^2 e^{10z/9} - 12 z^2 + 81);\\
        \text{Det}(\Delta')=&-\frac{1}{32 z^5} (22 e^{2z} - 71 z + 288 e^{2z/3} - 130 e^{4z/3} - 216 e^{5z/3} + 80 e^{7z/3} - 72 e^{8z/3} + 50 e^{10 z/3}\\
        &+ 136 e^z + 5 z e^{2z} + 63 z e^{2z/3} + 16 z e^{4z/3} - 86 z e^{5z/3} + 4 z e^{7z/3} + 23 z e^{8z/3} - 20 z e^{10z/3}\\
        &- 128 z e^{10z/9} - 28 z e^{13z/9} + 144 z e^{16z/9} - 4 z e^{19z/9} + 172 z e^{22z/9} - 156z e^{28z/9} - 4 z^2 e^z\\
        &- 50 z^2 e^{2z} - 30 z^2 e^{4z/3} + 8 z^2 e^{5z/3} - 4 z^2 e^{7z/3} + 56 z^2 e^{8z/3} + 2 z^2 e^{10z/3} - 30 z^2 e^{8z/9}\\
        &- 28 z^2 e^{10z/9} - 8 z^2 e^{13z/9} + 12 z^2 e^{17z/9} + 8 z^2 e^{19z/9} + 40 z^2 e^{22z/9} + 18 z^2 e^{26z/9} \\
        &- 12 z^2 e^{28z/9} + 66 z e^z + 22 z^2 - 158).
    \end{aligned}
\end{equation*}

\end{proof}

 Since the scheme (\ref{EDETDRK3}) has the largest positive smallest eigenvalue according to the following Fig. \ref{eigenvalueETDRK3}, we shall use it to represent energy stable ETDRK3 for numerical tests in Section \ref{sec4}.

\begin{figure}[H]
	\centering
	\includegraphics[width=10cm]{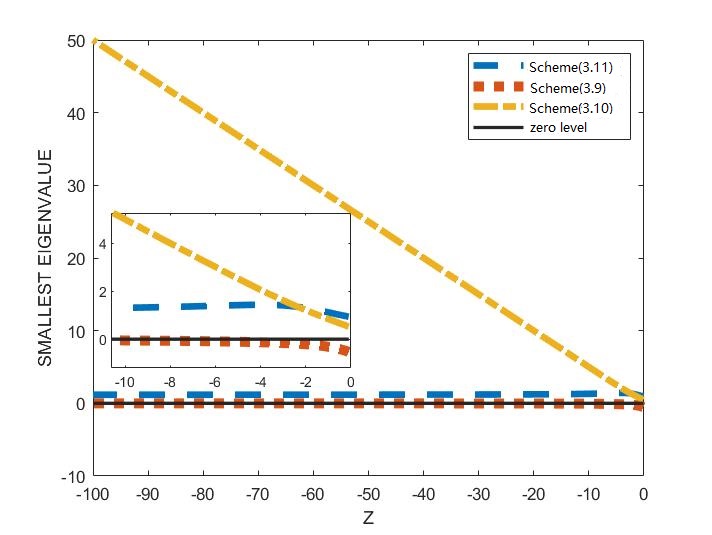}
	\caption{Eigenvalues of $\frac 12(\Delta+\Delta^T)$ for the three ETDRK3 schemes}
	\label{eigenvalueETDRK3}
\end{figure}

\subsection{Fourth-order ETDRK schemes}

In general, fourth-order ETDRK schemes need to satisfy four more order conditions:
\begin{center}\label{ordcond4}
\begin{tabular}{|c|c|}
\hline
  Order & Conditions \\ 
  \hline
  4 & $\psi_4=0$ \\
  4 & $\sum_{i=1}^s b_i J \psi_{3,i}=0$ \\
  4 & $\sum_{i=1}^s b_i J \sum_{j=2}^{i-1} a_{ij} J \psi_{2,j}=0$ \\
  4 & $\sum_{i=1}^s b_i c_i K \psi_{2,i}=0$ \\
  \hline
\end{tabular}
\end{center}
where $\psi_i(z)=\phi_i(z)-\sum_{k=1}^s b_k c_k^{j-1}/(j-1)!, \psi_{i,j}=\phi_i c_j^i-\sum_{k=1}^{j-1} a_{jk} c_k^{i-1}/(i-1)!$, and $J, K$ denote arbitrary bounded operators (for more details, see \cite{EXPintegrator}). These fourth-order conditions are more complicated than previous third-order conditions so that it is more difficult to construct fourth-order ETDRK schemes.  One of the most used ETDRK4 schemes is from Cox and Matthews \cite{ETDstiff}:
\begin{equation}
    \begin{array}{c|cccc}
    0   &    &    &    &   \\
    \frac{1}{2} &    \frac{1}{2} \phi_{1,2} & & &\\
    \frac{1}{2} &  0 & \frac{1}{2} \phi_{1,3}  &  & \\
    1   &    \frac{1}{2}\phi_{1,3} (\phi_{0,3}-1)  & 0 & \phi_{1,3}  &  \\
    \hline
        & \phi_1-3\phi_2+4\phi_3 & 2\phi_2-4\phi_3 & 2\phi_2-4\phi_3 & 4\phi_3-\phi_2
\end{array}
\end{equation}
The other is given by Krogstad \cite{Gintfac}:
\begin{equation}
    \begin{array}{c|cccc}
    0   &    &    &    &   \\
    \frac{1}{2} &    \frac{1}{2} \phi_{1,2} & & &\\
    \frac{1}{2} &  \frac{1}{2} \phi_{1,3} -\phi_{2,3} & \phi_{2,3}  &  & \\
    1   &   \phi_{1,4} -2\phi_{2,4}  & 0 & 2\phi_{2,4}  &  \\
    \hline
        & \phi_1-3\phi_2+4\phi_3 & 2\phi_2-4\phi_3 & 2\phi_2-4\phi_3 & 4\phi_3-\phi_2
\end{array}
\end{equation}
We plot in Fig. \ref{fig:eig2} the smallest eigenvalues of $\frac 12(\Delta+\Delta^T)$ for the two schemes above, the results clearly indicate that  the above two schemes do not satisfy our conditions for energy stability.
\begin{figure}[htbp]
    \centering
    \includegraphics[width=8cm]{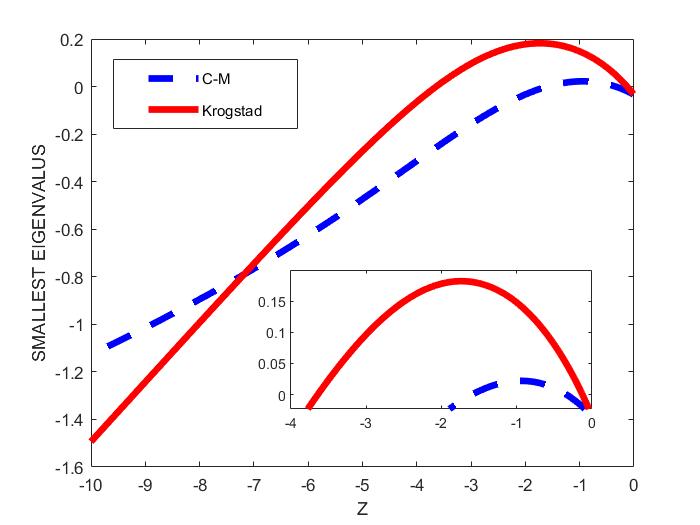}
    \caption{Eigenvalues of ETDRK4 schemes from Cox and Mathhews \cite{ETDstiff}, and Krogstad   \cite{Gintfac}}
    \label{fig:eig2}
\end{figure}
Moreover, we checked all existing fourth-order ETDRK schemes, and also searched from a wide family of four-stage and five-stage fourth-order ETDRK methods, but none of them satisfy the requirement of our theorem. 
Therefore, the existence of energy stable ETDRK4 schemes is still an open problem.

\section{Numerical Experiments}\label{sec4}

In this section we carry out some numerical experiments to illustrate the convergence and energy decay property of our new ETDRK schemes for different phase-field models. We first verify the temporal convergence rates using smooth initial data for the Allen--Cahn and Cahn--Hilliard equations. Next, we study the behavior with large time steps for the Allen--Cahn, Cahn--Hilliard, and phase field crystal equations. Then, we present energy curves for different time steps and stabilization constants to observe their influence. Finally, we present an adaptive time-stepping strategy that takes advantage of the unconditional energy stability.

In all simulations, we consider models with periodic conditions, and use a Fourier-spectral method in space with a sufficiently fine mesh so that the spatial discretization errors can be ignored compared with temporal discretization errors. The ETDRK3 scheme (\ref{EDETDRK3}) is used if not specified otherwise. Besides, we also consider a truncated double-well potential $\Tilde{F}(u)$ so that it naturally satisfies the Lipschitz condition. More precisely, for a sufficiently large $M$ ($M=2$ is enough for cases used in this paper), we replace $F(u)=\frac{1}{4} (u^2-1)^2$ by 
\begin{eqnarray}\nn
    \Tilde{F}(u) &= \left\{\begin{aligned}
        &\frac{3M^2-1}{2} u^2 - 2 \text{sgn}(u)  M^3 u +\frac{1}{4} (3M^4+1), \quad &|u|>M  \\
        &\frac{1}{4} (u^2-1)^2, \quad &|u|\leq M  
    \end{aligned}
    \right.,
\end{eqnarray}
and $f(u)=u^3-u$ by
\begin{eqnarray}\nn
    \Tilde{f}(u)=\Tilde{F}'(u) &= \left\{\begin{aligned}
        &(3M^2-1) u - 2 \text{sgn}(u)  M^3, \quad &|u|>M  \\
        &u^3-u, \quad &|u|\leq M  
    \end{aligned}
    \right..
\end{eqnarray} In fact, the maximum norm of numerical solutions never exceeds the bound $M$ so this replacement does not affect the properties of numerical solutions.
\subsection{Convergence tests}
We solve the Allen--Cahn and Cahn--Hilliard equations in $\Omega=(0,2\pi)\times(0,2\pi)$ with the smooth initial data $u_0=0.5 \sin{x}\sin{y}$. To compute the errors and the convergence rate, we take the number of grid points $N=128$, the interfacial parameter $\epsilon=0.5$ and set the final time $T=0.32$. With these settings we compute the numerical solutions with various time steps $\tau=0.01/2^{k}$ with $k=0,1,...,4$ and calculate the relative errors to get the convergence rate. The results for the Allen-Cahn and Cahn-Hilliard equations are listed in Tables \ref{table1} and \ref{table2}, respectively. In both cases, desired convergence rates are observed. 


\begin{table}[H]
  \begin{center}
    \begin{tabular}{|c|c|c|c|c|} 
    \hline
      \textbf{$\tau$=0.01} & \textbf{$L^\infty$ err} &\textbf{rate} & \textbf{$L^2$ err} & \textbf{rate}\\
      \hline
      $\tau$   & 2.6852e-08 & -      & 2.0736e-09 & -  \\ 
      $\tau$/2 & 3.4044e-09 & 2.9795 & 2.6291e-10 & 2.9795  \\
      $\tau$/4 & 4.2863e-10 & 2.9896 & 3.3101e-11 & 2.9896 \\
      $\tau$/8 & 5.3815e-11 & 2.9936 & 4.1557e-12 & 2.9937 \\
      $\tau$/16 & 6.7815e-12 & 2.9883 & 5.2341e-13 & 2.9891 \\
      \hline
    \end{tabular}
    \caption{ETDRK3 errors and convergence rates for the Allen-Cahn equation}\vspace{-2em}\label{table1}
  \end{center}
  
\end{table}


\begin{table}[H]
  \begin{center}
    \begin{tabular}{|c|c|c|c|c|} 
    \hline
      \textbf{$\tau$=0.01} & \textbf{$L^\infty$ err} &\textbf{rate} & \textbf{$L^2$ err} & \textbf{rate}\\
      \hline
      $\tau$   & 4.2646e-07 & -      & 3.7881e-08 & -  \\ 
      $\tau$/2 & 5.3484e-08 & 2.9952 & 4.8765e-09 & 2.9576  \\
      $\tau$/4 & 6.6767e-09 & 3.0019 & 6.1872e-10 & 2.9785 \\
      $\tau$/8 & 8.3316e-10 & 3.0025 & 7.7913e-11 & 2.9894 \\
      $\tau$/16 & 1.0392e-10 & 3.0032 & 9.7637e-12 & 2.9964 \\
      \hline
    \end{tabular}
    \caption{ETDRK3 errors and convergence rates for the Cahn-Hilliard equation}\vspace{-2em}\label{table2}
  \end{center}
\end{table}

\subsection{Large time step tests}

In this subsection, we study the accuracy of solutions with large time steps. We set $N=128$, $T=8$, $\epsilon=0.1$ for the Allen--Cahn equation and $\epsilon=0.5$ for the Cahn--Hilliard equation. We plot in Fig. \ref{errdtACCH} the relation between the error and the time steps $\tau=2^{1-k}$ for $k=0,1,...,7$, and observe that the solutions with large time steps still maintain good accuracy.

Next, we simulate the phase field crystal model with $N=256$, $\epsilon=0.025$, and $\beta=3$ in $\Omega=(0,32)\times(0,32)$. Using both ETDRK2 and ETDRK3 with the initial condition $u_0=\sin(\frac{\pi x}{16}) \sin(\frac{\pi x}{16})$, we compute the solution at $T=1$ with $\tau=2^{-k}$ for $k=0,1,...,11$. The results are plotted in Fig. \ref{PFCacc}. We observe that  both ETDRK schemes work well with large time steps. More precisely, with $\tau=0.1$, the errors of  ETDRK3 and ETDRK2 are about  $O(10^{-4})$ and $O(10^{-3})$, respectively. 

\begin{figure}[H]
\includegraphics[width=8cm,height=6cm]{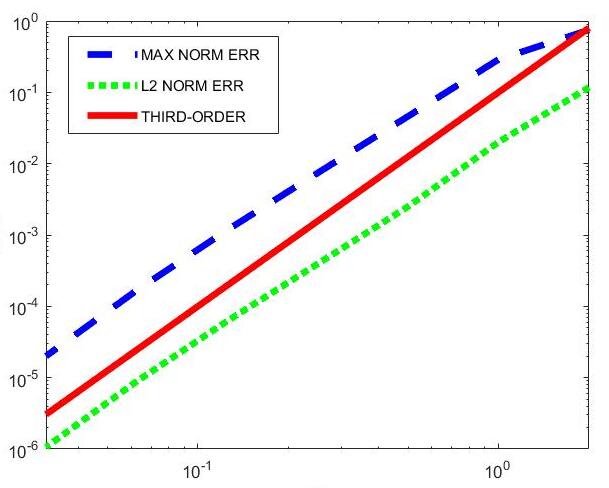}
\includegraphics[width=8cm,height=6cm]{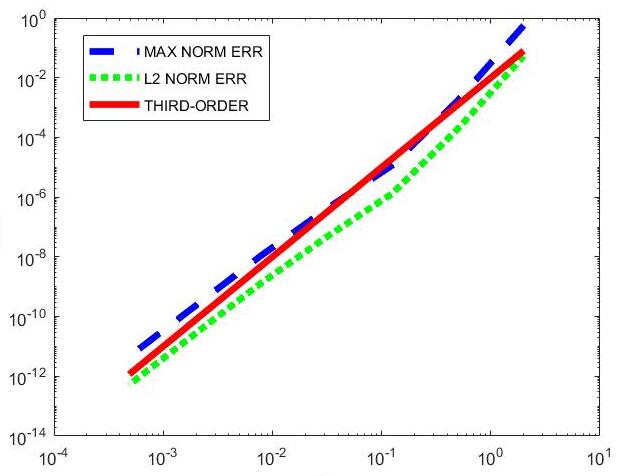}
\caption{Error-$\tau$ figure for the AC (left) and CH (right)}
\label{errdtACCH}
\end{figure}

\begin{figure}[H]
\centering
\includegraphics[width=13cm,height=10cm]{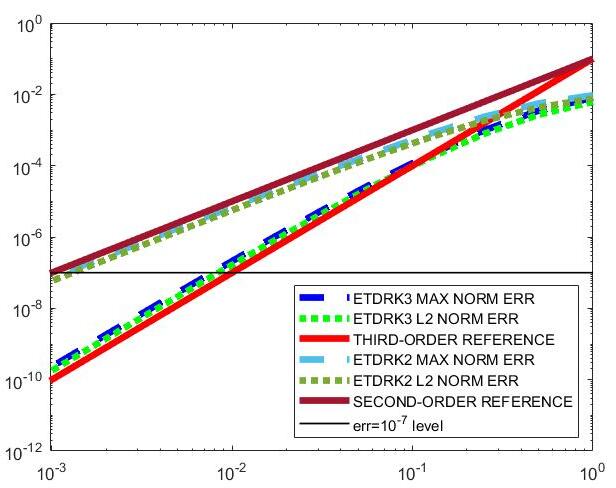}
\caption{Error-$\tau$ figure for the phase-field crystal model}
\label{PFCacc}
\end{figure}

\subsection{Energy curves with different time steps and stabilization constants}

Now we study how energy curves behave with different time steps and stabilization constants. It is well-known that stabilization may affect the dynamics and cause serious time delay phenomena when lower-order schemes are used or large time steps are taken. We first test the Cahn--Hilliard equation with $N=128$, $\epsilon=0.1$ and $\beta=2$. The results are presented in Fig. \ref{EnergyCH}. 
We observe that when time steps are large, the time delay phenomena is obvious, but with  smaller time steps $\tau\leq 0.01$, the influence becomes negligible.

\begin{figure}[H]
    \centering
    \includegraphics[width=12cm, height=8cm]{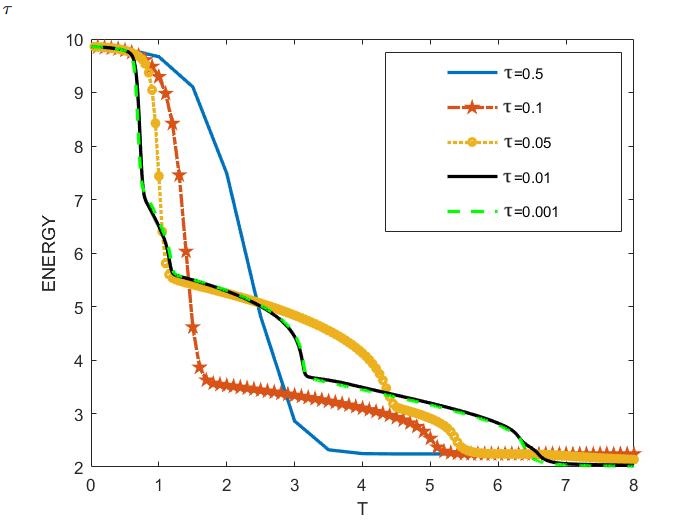}\vspace{-1em}
    \caption{Energy curves with different time steps for the Cahn--Hilliard equation}
    \label{EnergyCH}
\end{figure}

Next we examine the effect of different stabilization constants. Although theoretically they have to be larger than the Lipschitz constant, we can still run the simulation with smaller stabilization constants. While  smaller stabilization constants cause less time delay, but  solutions may lose its accuracy and even blow-up. Fig. \ref{EnergyDTbeta} shows that when $\tau=0.1$, the solutions are all inaccurate and exhibit obvious time delays, with the solution for $\beta=0.1$ even showing nonphysical oscillations. This evidence indicates that the stabilization does help the numerical solutions to maintain stability. When $\tau=0.01$, all three curves with $\beta=0.1,1,2$ are very close, which indicates that the stabilization has very little effect   when the time step is small.
\begin{figure}[htbp]
    \centering
    \includegraphics[width=12cm, height=8cm]{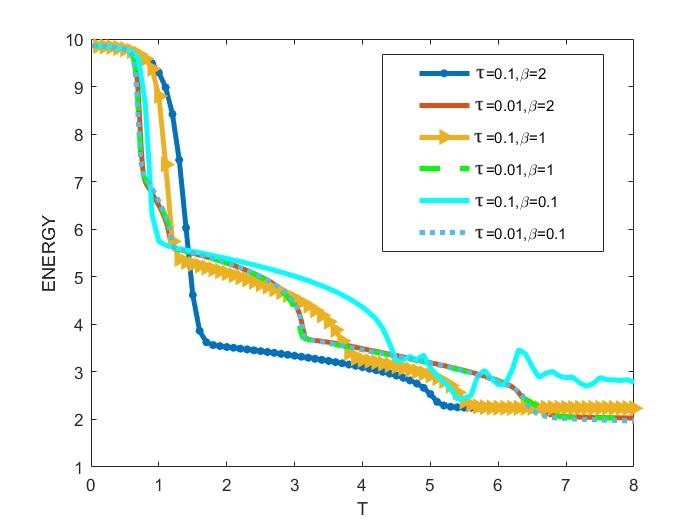}\vspace{-1em}
    \caption{Energy curves with different time steps and stabilization constants for the PFC model}
    \label{EnergyDTbeta}
\end{figure}

\subsection{Long time behavior of the phase field crystal model}
We simulate in this subsection the long time evolution of the phase field crystal (PFC) model  with $N=256$, $\epsilon=0.025$, $\Omega=(0,128)\times(0,128)$, and the initial condition
\begin{equation}
    u_0=0.05+0.01*rand(x), \quad x\in\Omega
\end{equation}
where $rand(x)$ is a uniformly distributed random function satisfying $-1\leq rand(x)\leq 1$. The long time behavior of the solution obtained by ETDRK3 with $\beta=3$ and $\tau=0.1$ is presented in Fig.\ref{PFCdyna}. 
\begin{figure}[H]
    \centering
    \includegraphics[width=14cm]{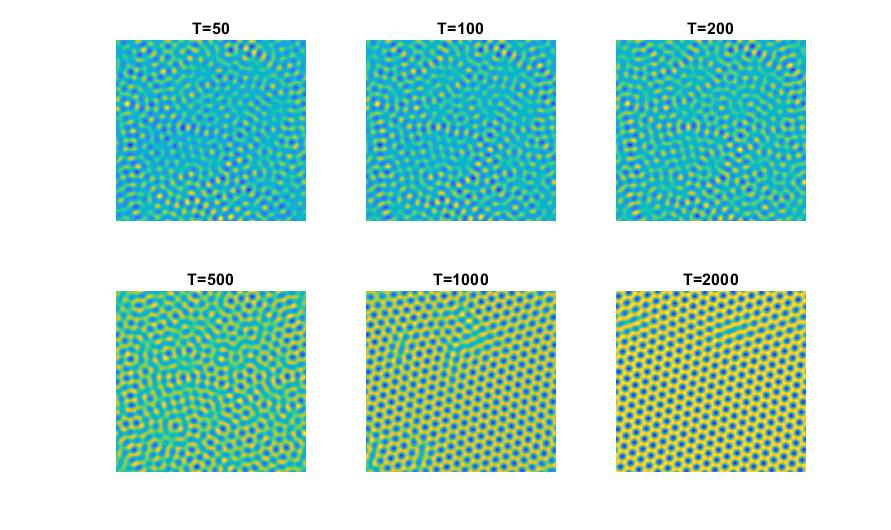}\vspace{-2em}
    \caption{ETDRK3 solutions for the PFC model with $\beta=3, \tau=0.1$ at T=50, 100, 200, 500, 1000, 2000}
    \label{PFCdyna}
\end{figure}
The energy curves with different time steps and stabilization constants are shown in Fig. \ref{PFCenergy}. There are 5 curves in total and 4 of them overlap. With $\beta=3$, the solution is good when $\tau=0.1$, but when $\tau=1$, the time delay caused by the stabilization is much more severe, which indicate that we should not use any stabilization when computing with large time steps. However, it is remarkable that ETDRK3 without stabilization performs very well even with the time step as large as $\tau=10$. 

\begin{figure}[H]
    \centering
    \includegraphics[width=13cm,height=9cm]{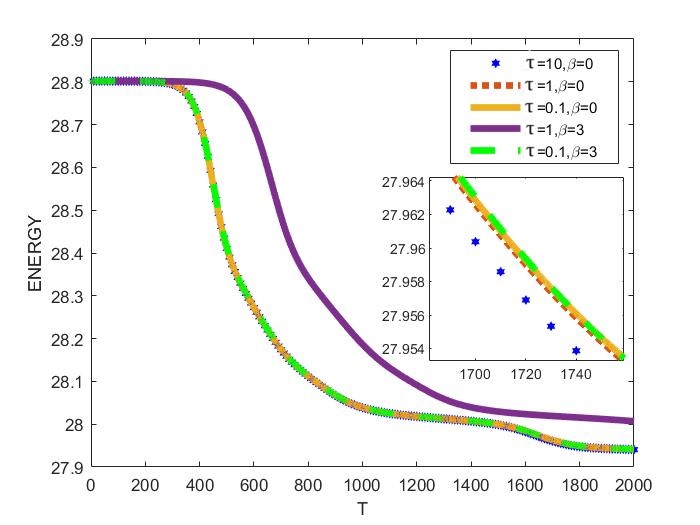}\vspace{-0.5em}
    \caption{Energy curves with different time steps and stabilizers}
    \label{PFCenergy}
\end{figure}

\subsection{Adaptive time stepping}
Solutions of gradient flows may vary drastically during some short time intervals, but change only slightly at other times. A main advantage of unconditional energy stable schemes is that they can be easily used with an adaptive time stepping algorithm, in which the time step is only dictated by accuracy rather than by stability. There are often essential difficulties in applying adaptive time-stepping strategies to other schemes, since most of them do not have robust unconditional stability with variable step sizes. This is also where the significance of the high-order unconditional energy stable schemes lies.

For gradient flows, there are several effective adaptive time stepping strategies, see \cite{BDF2CHvarstep,largestepCH,parafreeCH,adapCH}. Here we make use of the strategy in \cite{adpt} summarized in the following Algorithm 1. In Step 4 and 6, the time step size is given by the formula 
\begin{equation}
    A_{dp} (e,\tau)=\rho \left(\frac{tol}{e}\right)^{r} \tau,
\end{equation}
along with the restriction of the minimum and maximum time steps. In the above formula, $\rho$ is a default safety coefficient, $tol$ is a reference tolerance, $e$ is the relative error computed at each time level in Step 3, and $r$ is the adaptive rate. In our numerical examples, we set $\rho=0.9$ and $tol=5*10^{-3}$, the minimum time step is $10^{-4}$, while the maximum time step is $\tau=10^{-2}$ for Fig. $6$ and Fig. $7$, and $\tau=10^{-1}$ for Fig. $8$ and $9$. The initial time step is taken as the minimum time step.

\begin{algorithm}[H]
\caption{Adaptive time stepping procedure}
\textbf{Given:} $U^n,\tau_n$\\
\textbf{Step 1.} Compute $U^{n+1}_1$ by the first-order ETD scheme with $\tau_n$.\\
\textbf{Step 2.} Compute $U^{n+1}_2$ by the third-order ETDRK scheme with $\tau_n$.\\
\textbf{Step 3.} Calculate $e_{n+1}=\frac{\|U^{n+1}_1-U^{n+1}_2\|}{\|U^{n+1}_2\|}$.\\
\textbf{Step 4.} If $e_{n+1}>tol$, recalculate the time step $\tau_n\xleftarrow[]{} \max\{\tau_{min},\min\{A_{dp}(e_{n+1},\tau_n),\tau_{max}\}\}$,\\
\textbf{Step 5.} goto Step 1.\\
\textbf{Step 6.} else, update the time step $\tau_{n+1}\xleftarrow[]{}\max\{\tau_{min},\min\{A_{dp}(e_{n+1},\tau_n),\tau_{max}\}\}$.\\
\textbf{Step 7.} endif
\end{algorithm}

We take the two-dimensional Cahn--Hilliard equation as an example to examine the performance of the adaptive time stepping algorithm. We take $\epsilon=0.1$, $N=512$, $\beta=2$ and $r=1/3$. As comparison, we compute two ETDRK3 solutions with a small uniform time step $\tau=10^{-4}$ and large uniform time step $\tau=10^{-2}$ as references.

 We plot in Fig. \ref{CHadapenergy1}  the energy curses (left) and  the size of adaptive time steps (right). We observe that solution obtained with $\tau=10^{-2}$ is not accurate, while the adaptive time stepping solutions are in excellent agreement with the small time step solution with $\tau=10^{-4}$.  In addition, except at the initial time period  and two other small time intervals where the energy varies drastically, the adaptive time steps basically stay around $\tau=10^{-2}$. Finally, we plot in  Fig.  \ref{CHadapdyna1} snapshots of the phase evolution at different times.

\begin{figure}[H]
\centering    
\includegraphics[width=14cm]{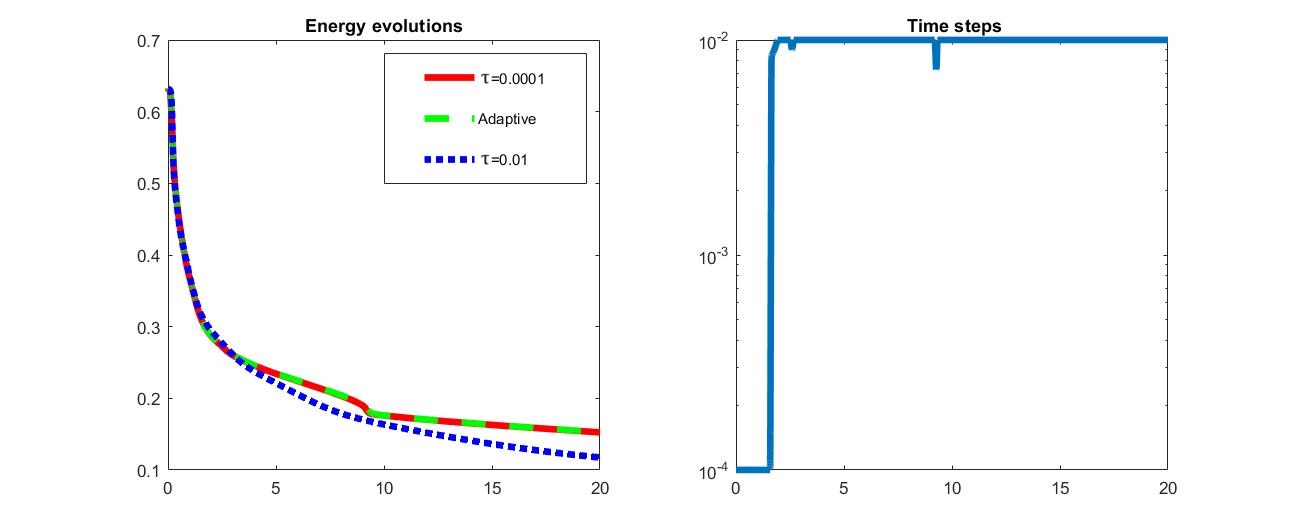}\vspace{-0.5em}
\caption{Energy curves among small time steps $\tau=0.0001$, adaptive time steps and large time steps $\tau=0.01$ and the size of time steps in the adaptive procedure}
\label{CHadapenergy1}
\end{figure}

\begin{figure}[H]
\centering         
\includegraphics[width=14cm]{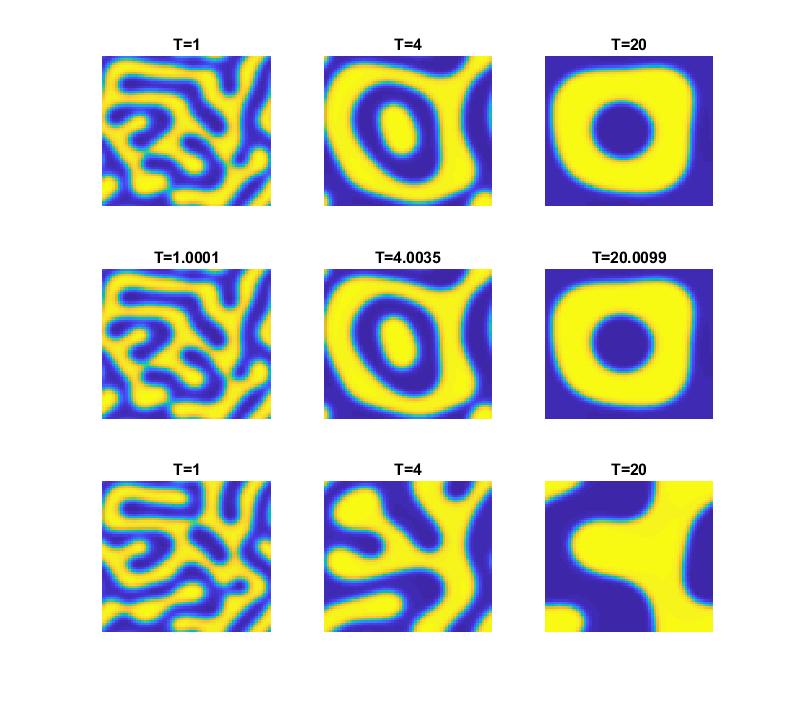}\vspace{-2em}
\caption{Solutions for the Cahn--Hilliard equation using small time steps $\tau=0.0001$ (first line), adaptive time steps (second line) and large time steps $\tau=0.01$ (third line)}
\label{CHadapdyna1}
\end{figure}

%

\section{Concluding Remarks}\label{sec5}

In this paper, we presented a general framework for constructing unconditionally energy stable ETDRK schemes of arbitrary order for a class of gradient flows and identified a set of conditions that must be satisfied for an ETDRK scheme to be unconditionally energy stable. In particular, we showed that the widely employed third- and fourth-order ETDRK schemes are not unconditionally energy stable, and constructed new  third-order ETDRK schemes which are unconditionally energy stable.

To the best of our knowledge, this is the first rigorous result on unconditionally energy stable ETDRK schemes higher than second-order. Potential future extensions include:
\begin{itemize}
    \item Higher than third-order schemes: While we provided a set of conditions that ETDRK schemes need to satisfy in order to be unconditionally energy stable, but due to the complexity of these conditions,  the existence of higher than third-order ETDRK  unconditionally energy stable schemes is still an open question. One possibility is to add   one more stage in the RK part which will lead to   more free parameters to choose from. 
    \item Other models: We restricted ourselves to a class of gradient flows with Lipschitz nonlinearities. Since the ETDRK schemes are based on the Duhamel's Principle so the only error comes from the numerical integration, we expect that the new ETDRK schemes would behave well for other models with even more complicated physical structures, although proving rigorously unconditionally energy stability for more complicated models would be a challenge.
\end{itemize}
We are hopeful that our results in this paper can be extended to more general systems  and to ETDRK schemes higher than third-order.

\bigskip
\noindent {\bf Acknowledgments}

J. Shen is partially supported by NSFC 12371409.
J. Yang is supported by the National Natural Science Foundation of China (NSFC) Grant No. 12271240, the NSFC/Hong Kong RGC Joint Research Scheme (NSFC/RGC 11961160718), and the Shenzhen Natural Science Fund (No. RCJC20210609103819018).

%

%

\bibliography{ref}
\bibliographystyle{siamplain}

\end{document}